\newtheorem{theorem}{Theorem}[section]
\newtheorem{corollary}[theorem]{Corollary}
\newtheorem{lemma}[theorem]{Lemma}
\newtheorem{proposition}[theorem]{Proposition}
\newtheorem{remark}[theorem]{Remark}
\theoremstyle{definition}
\begin{document}
\title{Total $k$-Uniform Graphs}
\author[1]{Selim Bahad{\i}r \thanks{sbahadir@ybu.edu.tr}}
\author[2]{Didem G\"{o}z\"{u}pek \thanks{didem.gozupek@gtu.edu.tr}}
\author[3]{Oğuz Doğan,\thanks{odogan13@ku.edu.tr} }
\affil[1]{Department of Mathematics,
	Ankara Y\i ld\i r\i m Beyaz\i t University, Turkey}
\affil[2]{Department of Computer Engineering,
	Gebze Technical University, Turkey}
\affil[3]{Department of Computer Engineering,
	Gebze Technical University, Turkey}
\date{\today}

\maketitle
\begin{abstract}
A sequence of vertices in a graph $G$ without isolated vertices is called a total dominating sequence if every vertex $v$ in the sequence has a neighbor which is adjacent to no vertex preceding $v$ in the sequence, and at the end every vertex of $G$ has at least one neighbor in the sequence.
Minimum and maximum lengths of a total dominating sequence is the total domination number of $G$ (denoted by $\gamma_t(G)$) and the Grundy total domination
number of $G$ (denoted by $\gamma_{gr}^t(G)$), respectively.
In this paper, we study graphs with equal total domination number and Grundy total domination number.
For every positive integer $k$,
we call $G$ a total $k$-uniform graph if $\gamma_t(G)=\gamma_{gr}^t(G)=k$.
We prove that there is no total $k$-uniform graph when $k$ is odd.
In addition, we present a total 4-uniform graph which stands as a counterexample for a conjecture by \cite{gologranc2019graphs} and 
provide a connected total 8-uniform graph.
Moreover, we prove that every total $k$-uniform, connected and false twin-free graph is regular for every even $k$.
We also show that there is no total $k$-uniform chordal connected graph with $k\geq 4$ and characterize all total $k$-uniform chordal graphs.
\end{abstract}

\noindent
{\it Keywords: Grundy total domination number, total domination number} \\

\section{Introduction}\label{sec:intro}

Let $G$ be a simple graph with vertex set $V(G)$ and edge set $E(G)$.
The neighborhood of a vertex $v\in V(G)$, denoted by $N(v)$, is the set of vertices adjacent to $v$.
The closed neighborhood of a vertex $v\subseteq V(G)$, denoted by $N[v]$, is $N(v)\cup \{v\}$.
A subset $A$ of $V(G)$ is called a \emph{dominating set} of $G$ if every vertex in $V(G) \backslash A$ has at least one neighbor in $A$.
If $G$ has no isolated vertices, a subset $A\subseteq V(G)$ is called a \emph{total dominating set} of $G$ if every vertex of $V(G)$ is adjacent to at least one member of $A$.
The \emph{total domination number} of $G$ with no isolated vertices, denoted by $\gamma_t(G)$, is the minimum size of a total dominating set of $G$.

A sequence $S=(v_1, \dots , v_k)$ of distinct vertices of $G$ is a \emph{legal (open neighborhood) sequence} if
$$N(v_i)\backslash \bigcup_{j=1}^{i-1} N(v_j)\neq \emptyset$$
holds for every $i \in \{2, \dots , k\}$.
If, in addition, $\{v_1,\dots,v_k\}$ is a total dominating set of $G$, then we call $S$ a \emph{total dominating sequence} of G.
The maximum length of a total dominating sequence in $G$ is called the \emph{Grundy total domination number} of $G$ and it is denoted by $\gamma_{gr}^t(G)$.

It is clear that length of a total dominating sequence in $G$ is at least $\gamma_t(G)$, and any permutation of a minimum total dominating set of $G$ forms a total dominating sequence attaining this lower bound.
In this paper,
we study graphs in which the Grundy total domination number is equal to the total domination number.
A graph $G$ is called \emph{total $k$-uniform} if $\gamma_t(G)=\gamma_{gr}^t(G)=k$.
In other words, a graph $G$ is total $k$-uniform if and only if every total dominating sequence is of length $k$.
Total $k$-uniform graphs are indeed total domination version of \emph{$k$-uniform graphs} introduced in \cite{erey2020uniform}.
A sequence $(v_1, \dots , v_k)$ is called \emph{dominating closed neighborhood sequence} if $\{v_1,\dots,v_k\}$ is a dominating set and $N[v_i]\backslash \bigcup_{j=1}^{i-1} N[v_j]\neq \emptyset$
holds for every $i \in \{2, \dots , k\}$.
A graph is called $k$-uniform whenever every dominating closed neighborhood sequence is of length $k$.
$k$-uniform graphs with $k\leq 3$ are characterized in \cite{brevsar2014dominating}, whereas the work in \cite{erey2020uniform} provided the complete characterization of $k$-uniform graphs.

Numerous variants of Grundy total domination such as Grundy domination, Z-Grundy domination and L-Grundy domination exist in the literature \cite{brevsar2014dominating, brevsar2016atomic, brevsar2016toroidal, brevsar2017zeroforcing}. The work in \cite{brevsar2017zeroforcing} investigated the relations between these types of Grundy domination as well as the relation between the Z-Grundy domination number and the zero forcing number of a graph.

Total domination concept in graphs was introduced in 1980 \cite{cockayne1980total} and has been studied extensively in the literature (see \cite{henning2013total}). The parameter $\gamma_{gr}^t(G)$ was first introduced by \cite{brevsar2016total}, who obtained bounds on $\gamma_{gr}^t(G)$ for trees and regular graphs in terms of other graph variants. The decision versions of both the total domination number and the Grundy total domination number are NP-complete \cite{brevsar2016total}. Indeed, the problem of finding the Grundy total domination number is NP-hard in bipartite graphs \cite{brevsar2016total} and split graphs \cite{brevsar2018total}, while it is solvable in polynomial time in trees, bipartite distance-hereditary graphs, and $P_4$-tidy graphs \cite{brevsar2018total}.

Various bounds for Grundy total domination number have been obtained for several graph classes such as regular graphs and graph products \cite{brevsar2017grundy, brevsar2016total}. The work in \cite{brevsar2016total} also characterized the graphs where the Grundy total domination number attains its trivial upper bound $|V(G)|$.
In their work, it is additionally shown that complete multipartite graphs are the only total 2-uniform graphs and there are no total 3-uniform graphs.
In this paper, we generalize their result for total 3-uniform graphs and prove that there does not exist any total $k$-uniform graph when $k$ is odd and hence, we partially solve the open problem (characterizing the graphs $G$ such that $\gamma_t(G)=\gamma_{gr}^t(G)=k$ for $k\geq 4$) posed by \cite{brevsar2016total}.
We also obtain that removing all vertices with the same neighborhood but except one from a connected total $k$-uniform graph gives rise to a regular graph.

\cite{gologranc2019graphs} characterized total 4-uniform bipartite graphs, showed that there is no connected total 4-uniform chordal graph and established a correspondence between regular total 6-uniform bipartite graphs and some certain finite projective planes.
The authors also claimed that any connected total 4-uniform graph is bipartite.
In this paper, we disprove their conjecture by exhibiting a graph with fifteen vertices.
We additionally prove that there are no connected total $k$-uniform chordal graphs when $k\geq 4$; hence, we classify all total $k$-uniform chordal graphs.

The remainder of this paper is organized as follows.
In Section \ref{sec:totalkodd} we provide a reduction from total $k$-uniform graphs to total $(k-2)$-uniform graphs, which is essential to this paper and implies that there does not exist a total $k$-uniform graph when $k$ is odd.
In Section \ref{sec:newtkunif}, 
we present a connected, non-bipartite and total 4-uniform graph together with a connected total 8-uniform graph.
In Section \ref{sec:reg}, we show that connected, false twin-free and total $k$-uniform graphs are regular.
Section \ref{sec:chordal} contains the complete characterization of total $k$-uniform chordal graphs.
Discussion and conclusions are provided in Section \ref{sec:dis}.

\section{A Reduction from Total k-Uniform Graphs to Total (k-2)-Uniform Graphs}\label{sec:totalkodd}

If $G$ is a graph with no isolated vertices, then every legal sequence in $G$ which is not a total dominating sequence can be extended to a total dominating sequence of $G$. 
Moreover, a graph $G$ contains a total dominating sequence if and only if $G$ has no isolated vertices. 
We shall implicitly make use of these observations in the sequel. 

A graph whose all minimal total dominating sets are of the same size is called a \emph{well-totally-dominated} graph (see \cite{wtd}).
Well-totally-dominated graphs are initially introduced and studied in \cite{HaRa97}.
Notice that any permutation of a minimal total dominating set generates a total dominating sequence and therefore, every total $k$-uniform graph is a well-totally-dominated graph; that is, the family of total $k$-uniform graphs is a subset of well-totally-dominated graphs.
It is shown in \cite{HaRa97} that removing two adjacent vertices together with their neighbors from a well-totally-dominated graph gives rise to another well-totally-dominated graph as long as the resulting graph has no isolated vertex.
Based on this idea,
we show some lemmas which are essential for the main results in this paper.

\begin{lemma}\label{lem:kunifiso}
If $v_1v_2$ is an edge of a total $k$-uniform graph $G$ where $k\geq 2$, then $G\backslash (N[v_1]\cup N[v_2]) $ has no isolated vertices.
\end{lemma}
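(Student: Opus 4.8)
The plan is to argue by contradiction: suppose $v_1v_2$ is an edge of a total $k$-uniform graph $G$ but some vertex $w$ becomes isolated in $G' := G\setminus(N[v_1]\cup N[v_2])$. Being isolated in $G'$ means every neighbor of $w$ in $G$ lies in $N[v_1]\cup N[v_2]$, i.e. $N_G(w)\subseteq N[v_1]\cup N[v_2]$. The idea is to build a total dominating sequence of $G$ whose length differs from $k$, contradicting total $k$-uniformity. I would start the sequence with $(v_1,v_2)$: this is a legal sequence since $N(v_1)\neq\emptyset$ and $N(v_2)$ contains $v_1$, which is not yet covered (here I should double-check the indexing — actually the legality condition only constrains $v_2,\dots$, and $N(v_2)\setminus N(v_1)$ may be empty, so I may instead want to start with $v_2,v_1$ or simply note $N(v_1)$ and $N(v_2)$ together already "use up" $w$'s entire neighborhood). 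The key point is that after placing $v_1$ and $v_2$, the vertex $w$ is already totally dominated and can never again contribute a private neighbor: any vertex we later add that is adjacent to $w$ is adjacent to something in $N[v_1]\cup N[v_2]$, but that does not immediately block it, so this needs care.

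The cleaner route: compare two sequences. First extend $(v_1,v_2)$ to a total dominating sequence $S_1$ of $G$; by $k$-uniformity $|S_1| = k$. Now I want a second total dominating sequence of a different length. Consider the graph $G'$ together with its isolated vertex $w$. Since every total dominating sequence of $G$ that begins with $v_1,v_2$ must, at some point, totally dominate $w$ — but $w$ is already dominated by $v_1$ (as $w\in N(v_1)$ or $w\in N(v_2)$; at least one holds since $N_G(w)\cap(N[v_1]\cup N[v_2])\ne\emptyset$ when $w$ has a neighbor, and $w$ does have a neighbor as $G$ has no isolated vertices). So $w$ is handled for free. The actual contradiction I expect to extract is about *lengths*: the vertices of $N[v_1]\cup N[v_2]$ form a "closed" region, and a legal sequence restricted to $G'$ after the prefix $(v_1,v_2)$ behaves almost like a legal sequence of $G'$, except that the isolated vertex $w$ of $G'$ forces an off-by-something discrepancy. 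Concretely, I would show that from the prefix $(v_1,v_2)$ one can reach total dominating sequences of $G$ of two different lengths by exploiting that $w$'s presence lets some vertex be legally appended in one ordering but not another — contradicting uniformity.

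Given the parenthetical hint in the paper ("removing two adjacent vertices together with their neighbors from a well-totally-dominated graph gives rise to another well-totally-dominated graph as long as the resulting graph has no isolated vertex"), the intended argument is likely shorter: a total $k$-uniform graph is well-totally-dominated, and if $G'$ had an isolated vertex $w$, then $w$'s neighbors all lie in $N[v_1]\cup N[v_2]$; pick a minimal total dominating set $D'$ of $G' \setminus w$-type object and a minimal total dominating set of $G$ extending $\{v_1,v_2\}\cup D'$, then perturb it near $w$ (swap in/out a neighbor of $w$) to produce a minimal total dominating set of $G$ of a different size, contradicting that $G$ is well-totally-dominated. I would therefore: (i) invoke that $G$ is well-totally-dominated; (ii) assume $w$ isolated in $G'$ and set $X = N_G(w)\subseteq N[v_1]\cup N[v_2]$; (iii) build two minimal total dominating sets of $G$ of different sizes by controlling how $w$ is dominated and which neighbors of $v_1,v_2$ are kept. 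The main obstacle I anticipate is bookkeeping: ensuring both constructed sets are genuinely *minimal* total dominating sets (no removable vertex) and genuinely of *different* sizes — the perturbation near $w$ must be shown not to be undone by forced changes elsewhere in $N[v_1]\cup N[v_2]$. Handling that interaction carefully is the crux of the argument.
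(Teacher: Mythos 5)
There is a genuine gap: you set up the right framework (assume $w$ is isolated in $G' = G\setminus(N[v_1]\cup N[v_2])$, note $N_G(w)\subseteq N(v_1)\cup N(v_2)$, and aim for a total dominating sequence of the wrong length), but you never produce the construction. All three of your routes end in an admission that the crux is unresolved (``this needs care,'' ``I would show\dots,'' ``handling that interaction carefully is the crux''). The missing idea is to \emph{prepend} $w$ rather than to append or perturb: extend $(v_1,v_2)$ to a total dominating sequence $(v_1,v_2,\dots,v_k)$ of length $k$, observe that $w$ cannot occur among $v_3,\dots,v_k$ (it can never be legally appended, since $N(w)\subseteq N(v_1)\cup N(v_2)$), and then check that $(w,v_1,v_2,\dots,v_k)$ is a total dominating sequence of length $k+1$. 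The point is that $w$, as the first term, is exempt from the legality condition; $v_1$ and $v_2$ stay legal because $w\notin N[v_1]\cup N[v_2]$ implies $v_2\in N(v_1)\setminus N(w)$ and $v_1\in N(v_2)\setminus(N(w)\cup N(v_1))$; and for $i\ge 3$ the union $\bigcup_{j<i}N(v_j)$ is unchanged by adjoining $N(w)$, since $N(w)\subseteq N(v_1)\cup N(v_2)$. This contradicts $\gamma_{gr}^t(G)=k$ directly, with no minimality bookkeeping.

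Separately, your intermediate claim that ``$w\in N(v_1)$ or $w\in N(v_2)$'' (so that $w$ is ``handled for free'' after placing $v_1,v_2$) is false: $w$ survives the deletion of $N[v_1]\cup N[v_2]$ precisely because $w\notin N[v_1]\cup N[v_2]$, i.e., $w$ is adjacent to \emph{neither} $v_1$ nor $v_2$. What is true is that every \emph{neighbor} of $w$ lies in $N(v_1)\cup N(v_2)$, which is a different statement and is exactly what makes the prepending argument work. Your final fallback via well-totally-dominatedness is also not promising as sketched: the Hartnell--Rall result you allude to \emph{assumes} the reduced graph has no isolated vertex, so it cannot be used to prove that fact, and the WTD property alone (equal sizes of minimal total dominating sets) is weaker than what the sequence argument exploits.
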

\begin{proof}
Suppose that $G\backslash (N[v_1]\cup N[v_2])$ has an isolated vertex $v_0$.
Notice that $v_0$ is adjacent to none of $v_1$ and $v_2$, but every neighbor of $v_0$ is adjacent to $v_1$ or $v_2$.
In other words, $v_0\notin N[v_1]\cup N[v_2]$ and $N(v_0)\subseteq N(v_1)\cup N(v_2)$.

Since $v_1\in N(v_2)\backslash N(v_1) $, $(v_1,v_2)$ is a legal sequence and can be extended to a total dominating sequence of $G$, say $(v_1,v_2,\dots,v_k)$.
Note that $v_i\neq v_0$ for every $3\leq i \leq k$ as $N(v_0)\subseteq N(v_1)\cup N(v_2)$.

Now consider the sequence $(v_0,v_1,v_2,\dots,v_k)$.
Since $v_2 \in N(v_1)\backslash N(v_0)$, $v_1 \in N(v_2) \backslash (N(v_0)\cup N(v_1))$ and 
$$ N(v_i) \backslash \cup_{j=0}^{i-1} N(v_j)=N(v_i) \backslash \cup_{j=1}^{i-1} N(v_j)\neq \emptyset$$
holds for every $3\leq i\leq k$,
$(v_0,v_1,v_2,\dots,v_k)$ is a legal sequence.
Moreover, the set $\{v_0,\dots,v_k\}$ is also a total dominating set as it contains a total dominating set $\{v_1,\dots,v_k\}$.
Therefore, $(v_0,\dots,v_k)$ is a total dominating sequence of length $k+1$, contradiction.
\end{proof}

\begin{lemma}\label{lem:reduction}
Let $G$ be a total $k$-uniform graph with no isolated vertices where $k\geq 3$.
If $uv\in E(G)$, then $G\backslash (N[u]\cup N[v]) $ is a total $(k-2)$-uniform graph with no isolated vertex.
\end{lemma}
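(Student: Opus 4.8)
The plan is to let $H = G\setminus(N[u]\cup N[v])$ and show two things: first, $H$ has no isolated vertex, which is exactly Lemma~\ref{lem:kunifiso}; and second, $\gamma_t(H) = \gamma^t_{gr}(H) = k-2$. The natural bridge between sequences in $G$ and sequences in $H$ is the operation of prepending $(u,v)$: since $u \in N(v)\setminus N(u)$, the pair $(u,v)$ is always a legal sequence in $G$, and for any vertex $w \in V(H)$ we have $N_H(w) = N_G(w)\setminus(N[u]\cup N[v])$ because removing the closed neighborhoods of $u$ and $v$ deletes precisely the vertices of $N(u)\cup N(v)$ from the graph (note $w\notin N[u]\cup N[v]$). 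I would first record this identity, and also the key fact that for a vertex $w$ that is \emph{not} adjacent to $u$ or $v$, we have $N_G(w)\setminus(N(u)\cup N(v)) = N_H(w)$ together with $N_G(w)\subseteq N_H(w)\cup N(u)\cup N(v)$.

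The core of the argument is a correspondence between total dominating sequences of $H$ and certain total dominating sequences of $G$. For the upper bound $\gamma^t_{gr}(H)\le k-2$: take any total dominating sequence $S=(w_1,\dots,w_m)$ of $H$. I claim $(u,v,w_1,\dots,w_m)$ is a legal sequence of $G$. Legality of the first two terms is clear; for $w_i$, the set $N_G(w_i)\setminus\bigl(N(u)\cup N(v)\cup\bigcup_{j<i}N_G(w_j)\bigr)$ equals $N_H(w_i)\setminus\bigcup_{j<i}N_H(w_j)$, which is nonempty by legality of $S$ in $H$ — here I use that each $w_j$, being a vertex of $H$, is nonadjacent to $u$ and $v$, so intersecting $N_G(w_j)$ with the complement of $N(u)\cup N(v)$ gives $N_H(w_j)$. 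Then I must check that $\{u,v,w_1,\dots,w_m\}$ totally dominates $G$: vertices in $N(u)\cup N(v)$ are dominated by $u$ or $v$; the vertices $u,v$ themselves are dominated by each other (using $uv\in E(G)$); and any remaining vertex $x\in V(H)$ is dominated in $H$ by some $w_j$, hence in $G$. So this is a total dominating sequence of $G$ of length $m+2$, forcing $m+2 = k$, i.e.\ $m = k-2$; in particular every total dominating sequence of $H$ has length exactly $k-2$, which simultaneously gives $\gamma^t_{gr}(H)\le k-2$ and $\gamma_t(H)\ge k-2$.

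For the reverse bound $\gamma_t(H)\le k-2$ (equivalently $\gamma^t_{gr}(H)\ge k-2$, and with the previous paragraph this pins both to $k-2$), I would go the other direction: start from a total dominating sequence of $G$ beginning $(u,v,\dots)$. Extend $(u,v)$ to a total dominating sequence $(u,v,x_1,\dots,x_t)$ of $G$; then $t+2=k$. Restrict to the subsequence of $x_i$'s lying in $V(H)$, say $(x_{i_1},\dots,x_{i_r})$. This subsequence is legal in $H$: for each kept term, $N_H(x_{i_s})\setminus\bigcup_{s'<s}N_H(x_{i_{s'}}) \supseteq N_G(x_{i_s})\setminus\bigl(\bigcup_{j<i_s}N_G(x_j)\cup\{$irrelevant stuff$\}\bigr)$, and one checks the discarded $x_j$ (those in $N(u)\cup N(v)$) only removed neighbors that were already removed by $u,v$. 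It also totally dominates $H$, since every vertex of $H$ was totally dominated in $G$ by some $x_j$, and that $x_j$ cannot be $u$, $v$, or a vertex of $N(u)\cup N(v)$ (such a vertex is nonadjacent to anything in $H$ only if... — here one uses that a neighbor of a vertex in $H$ lies outside $N(u)\cup N(v)$, hence is in $H$). Thus $r\ge \gamma_t(H)$ is not quite what I want; rather, extending this legal $H$-sequence to a total dominating sequence of $H$ and reapplying the first correspondence shows its length is $k-2$, while it already dominates, so $\gamma_t(H)\le r \le k-2$. Combined: $\gamma_t(H)=\gamma^t_{gr}(H)=k-2$.

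The main obstacle is the bookkeeping in the legality transfers — precisely verifying that deleting a term of a legal sequence (when passing from $G$ to $H$) or inserting the prefix $(u,v)$ (when passing from $H$ to $G$) does not destroy the ``new neighbor'' condition. The clean way to handle this is the observation, used repeatedly, that for every $w\in V(H)$ the private-neighbor computation in $G$ relative to a vertex set containing $\{u,v\}$ coincides with the computation in $H$, because $N_G(w)\cap(N(u)\cup N(v))$ is already ``covered'' by $u$ and $v$. Once that lemma-within-the-proof is isolated, both directions are essentially immediate, and the hypothesis $k\ge 3$ is what guarantees $H$ is nonempty (together with Lemma~\ref{lem:kunifiso} guaranteeing it is isolated-vertex-free, so that $\gamma_t(H)$ and $\gamma^t_{gr}(H)$ are well defined).
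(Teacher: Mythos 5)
Your first paragraph is essentially the paper's own proof: the paper takes an arbitrary total dominating sequence $(v_1,\dots,v_m)$ of $H=G\backslash(N[u]\cup N[v])$ and observes that appending $(u,v)$ at the end (rather than prepending, as you do) yields a total dominating sequence of $G$, forcing $m=k-2$; combined with $H$ being nonempty (from $k\geq 3$) and isolated-vertex-free (Lemma~\ref{lem:kunifiso}), this already shows $H$ has a total dominating sequence and that every such sequence has length $k-2$, which is precisely the definition of total $(k-2)$-uniformity — so your entire second paragraph is unnecessary. Note also that the second paragraph contains a false sub-claim: the subsequence of the $x_i$ lying in $V(H)$ need \emph{not} totally dominate $H$, since a vertex of $H$ can have all of its $G$-dominators inside $N(u)\cup N(v)$ (only the \emph{private} neighbors guaranteed by legality are forced to lie in $V(H)$); your fallback of extending the legal $H$-sequence and reapplying the first correspondence sidesteps this, but none of that machinery is needed.
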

\begin{proof}
First note that $\{u,v\}$ is not a total dominating set of $G$ since $k\geq 3$, and hence $G\backslash (N[u]\cup N[v]) $ is not the empty graph.
Moreover, by Lemma \ref{lem:kunifiso} we have that $G\backslash (N[u]\cup N[v]) $ has no isolated vertex.
Let $(v_1,\dots,v_m)$ be any total dominating sequence of $G\backslash (N[u]\cup N[v]) $.
It is easy to verify that $(v_1,\dots,v_m,u,v)$ is a total dominating sequence of $G$.
Thus, we get $m+2=k$, that is, $m=k-2$, and the result follows.
\end{proof}

Now, for odd $k\geq 3$, applying Lemma \ref{lem:reduction} $(k-1)/2$ times beginning with a total $k$-uniform graph yields a total 1-uniform graph, which does not exist since $\gamma_t(G)\geq 2$ for every $G$ with no isolated vertex.
Consequently, we obtain the following result:
\begin{theorem}\label{thm:noodd}
There does not exist a total $k$-uniform graph where $k$ is an odd positive integer.
\end{theorem}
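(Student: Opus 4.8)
The plan is to argue by iterated reduction, using Lemma \ref{lem:reduction} to peel two units off the parameter at each step until we reach a value that no graph can realize. The cleanest way to organize this is as an induction on odd $k$, with the impossibility of $k=1$ as the base case.

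For the base case $k=1$: a total $1$-uniform graph would have $\gamma_t(G)=1$, but in any graph without isolated vertices a single vertex $v$ is never adjacent to itself, so no one-element set is a total dominating set; hence $\gamma_t(G)\geq 2$ always, and no total $1$-uniform graph exists. For the inductive step, suppose toward a contradiction that $G$ is total $k$-uniform for some odd $k\geq 3$. Since $\gamma_t(G)=k\geq 2$, $G$ has no isolated vertices and therefore contains at least one edge $uv$. Applying Lemma \ref{lem:reduction} to this edge, the graph $G\backslash(N[u]\cup N[v])$ is a total $(k-2)$-uniform graph with no isolated vertex. As $k$ is odd, $k-2$ is odd and $1\leq k-2$; either $k-2=1$, immediately contradicting the base case, or $k-2\geq 3$ and we invoke the inductive hypothesis for the odd value $k-2$ to reach the same contradiction. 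Equivalently, one may phrase this without induction: starting from a putative total $k$-uniform graph and applying Lemma \ref{lem:reduction} exactly $(k-1)/2$ times walks the parameter down the odd chain $k, k-2, \dots, 3, 1$ and produces a total $1$-uniform graph, which is impossible.

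There is no genuine obstacle here, since Lemmas \ref{lem:kunifiso} and \ref{lem:reduction} already supply all the structural work; the only things to check are bookkeeping points. At each stage one must confirm that the hypotheses of Lemma \ref{lem:reduction} still hold: the current graph has no isolated vertices (guaranteed for the output of each reduction by Lemma \ref{lem:reduction} itself, and for the initial graph since $\gamma_t\geq 2$), the current parameter is at least $3$ (which holds at every step except the final one, whose output has parameter $1$ and is precisely the sought contradiction), and an edge is available to reduce along (which follows from the absence of isolated vertices). One also notes that the descending chain of odd integers $k, k-2, \dots$ terminates exactly at $1$, so the induction bottoms out at the base case rather than skipping past it.
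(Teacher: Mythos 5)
Your proof is correct and follows essentially the same route as the paper: repeated application of Lemma \ref{lem:reduction} to descend the odd chain $k, k-2, \dots$ down to a total $1$-uniform graph, which cannot exist since $\gamma_t(G)\geq 2$ for every graph without isolated vertices. The inductive packaging and the bookkeeping checks you add are consistent with, and slightly more explicit than, the paper's one-line argument.
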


Theorem 3.2 in \cite{brevsar2014dominating} implies that for every $l$ with $\gamma_t(G)\leq l \leq \gamma_{gr}^t(G)$ there exists a total dominating sequence of length $l$ in $G$ (which is also Corollary 8.2 in \cite{brevsar2016total}).
Combining this fact with Theorem \ref{thm:noodd} we obtain the following result:
\begin{corollary}\label{cor:eventotaldomseq}
Every graph with no isolated vertex has a total dominating sequence of even length.
\end{corollary}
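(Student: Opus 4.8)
The plan is to derive Corollary~\ref{cor:eventotaldomseq} directly from Theorem~\ref{thm:noodd} together with the cited interpolation property. Let $G$ be an arbitrary graph with no isolated vertex. Then $G$ admits at least one total dominating sequence, so the quantities $\gamma_t(G)$ and $\gamma_{gr}^t(G)$ are well defined and satisfy $2\le \gamma_t(G)\le \gamma_{gr}^t(G)$.

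The first step is to invoke the interpolation result (Theorem~3.2 in \cite{brevsar2014dominating}, equivalently Corollary~8.2 in \cite{brevsar2016total}): for every integer $l$ with $\gamma_t(G)\le l\le \gamma_{gr}^t(G)$, the graph $G$ has a total dominating sequence of length exactly $l$. So it suffices to show that this closed interval $[\gamma_t(G),\gamma_{gr}^t(G)]$ contains an even integer. The second step is a small case analysis. If $\gamma_t(G)$ is even, we are immediately done, taking $l=\gamma_t(G)$. If $\gamma_t(G)$ is odd, then since $\gamma_t(G)\ge 2$ we have $\gamma_t(G)\ge 3$, so $G$ has a total dominating sequence of length $\gamma_t(G)\ge 3$; I must argue that $\gamma_{gr}^t(G)\ge \gamma_t(G)+1$, which then provides the even value $l=\gamma_t(G)+1$ in the interval. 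This strict inequality is exactly where Theorem~\ref{thm:noodd} enters: if instead $\gamma_{gr}^t(G)=\gamma_t(G)$, then with $k:=\gamma_t(G)$ odd, $G$ would be a total $k$-uniform graph with $k$ odd, contradicting Theorem~\ref{thm:noodd}. Hence $\gamma_{gr}^t(G)>\gamma_t(G)$, and applying interpolation with $l=\gamma_t(G)+1$ yields a total dominating sequence of even length.

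The main (and only) obstacle worth noting is that one must be slightly careful about the degenerate range: the argument that an odd $\gamma_t(G)$ forces $\gamma_{gr}^t(G)\ge\gamma_t(G)+1$ relies on the fact that ``$\gamma_t(G)=\gamma_{gr}^t(G)=k$'' is precisely the definition of total $k$-uniform, so that Theorem~\ref{thm:noodd} can be applied with no further hypotheses (in particular, no connectivity is needed). Once that is in place, everything else is routine bookkeeping with the interpolation lemma, and the corollary follows in a single line: the interval $[\gamma_t(G),\gamma_{gr}^t(G)]$ has length $0$ only when $G$ is total $k$-uniform, which by Theorem~\ref{thm:noodd} forces $k$ even, and otherwise the interval has length at least $1$ and therefore contains an even integer; in either case interpolation delivers a total dominating sequence of even length.
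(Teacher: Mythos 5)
Your proposal is correct and follows essentially the same route as the paper, which likewise derives the corollary by combining the interpolation property (Theorem 3.2 of \cite{brevsar2014dominating}) with Theorem~\ref{thm:noodd}. You merely spell out the short case analysis (even $\gamma_t(G)$ versus odd $\gamma_t(G)$ forcing $\gamma_{gr}^t(G)>\gamma_t(G)$) that the paper leaves implicit, and that analysis is accurate.
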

\begin{remark}
Note that Corollary \ref{cor:eventotaldomseq} can also be proven by using the ideas in the proofs of Lemma \ref{lem:kunifiso} and Lemma \ref{lem:reduction}.
\end{remark}

Two distinct vertices $u$ and $v$ of a graph $G$ are called \emph{false twins} if $N(u) = N(v)$.
A graph is \emph{false twin-free} if it has no false twins. 
Now notice that removing one of the false twins or creating a false twin of a vertex changes neither the total domination number nor the Grundy total domination number.
Therefore, the question of characterizing total $k$-uniform graphs is only interesting for false twin-free graphs.

Let $\mathcal{G}_k$ be the family of total $k$-uniform and false twin-free graphs for every even $k$. By Theorem 4.4 in \cite{brevsar2016total} we see that the family of total 2-uniform graphs is the set of all complete multipartite graphs (including complete graphs).
Therefore, $\mathcal{G}_2$ is the set of complete graphs with at least two vertices.

\begin{lemma}\label{lem:falsetwinreduction}
Let $k\geq 4$ be an even integer and $G\in \mathcal{G}_k$.
Then for any edge $uv\in E(G)$ the graph $G\backslash (N[u]\cup N[v]) $ is also false twin-free.
\end{lemma}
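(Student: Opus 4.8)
The plan is to argue by contradiction: suppose $H:=G\backslash(N[u]\cup N[v])$ has a pair of false twins $x\neq y$, so that $N_H(x)=N_H(y)$, and derive a total dominating sequence of $G$ of length $k+1$, contradicting $\gamma_{gr}^t(G)=k$. By Lemma~\ref{lem:reduction}, $H$ is total $(k-2)$-uniform with no isolated vertex; in particular every total dominating sequence of $H$ has length exactly $k-2$, and $k-2\geq 2$.

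First I would pin down how $x$ and $y$ differ in $G$. Put $W:=V(G)\backslash V(H)=N[u]\cup N[v]$ and observe that $\{u,v\}$ totally dominates $W$, since every vertex of $N(u)$ is dominated by $u$, every vertex of $N(v)$ by $v$, and $u,v$ dominate each other. Because $x,y\notin W$, we have $N_G(x)\cap V(H)=N_H(x)=N_H(y)=N_G(y)\cap V(H)$; since $G$ is false twin-free, $N_G(x)\neq N_G(y)$, so these neighborhoods must already differ on $W$. After interchanging $x$ and $y$ if necessary, I may therefore fix a vertex $w\in W$ with $w\in N_G(x)$ and $w\notin N_G(y)$.

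Next I would record the observation that no legal sequence of $H$ contains both $x$ and $y$: whichever of them is played later has its entire neighborhood (which equals $N_H(x)=N_H(y)$) already contained in the union of the neighborhoods of its predecessors. Now extend the (trivially legal) one-term sequence $(y)$ to a total dominating sequence $(y,z_2,\dots,z_{k-2})$ of $H$; this is possible since $H$ has no isolated vertex, and the observation forces $x\notin\{z_2,\dots,z_{k-2}\}$. I then claim that $(y,\,x,\,z_2,\dots,z_{k-2},\,u,\,v)$ is a total dominating sequence of $G$. Its $k+1$ entries are pairwise distinct, since $y,x$ and the $z_i$ lie in $V(H)$ and are distinct there while $u\neq v$ lie in $W$; hence this sequence contradicts total $k$-uniformity of $G$ and the proof is complete.

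It remains to verify that the sequence $(y,x,z_2,\dots,z_{k-2},u,v)$ is legal and totally dominating. Total domination is immediate: $\{y,z_2,\dots,z_{k-2}\}$ is a total dominating set of $H$, hence dominates $V(H)$, and $\{u,v\}$ dominates $W$. For legality, playing $x$ right after $y$ is legal because $w\in N_G(x)\backslash N_G(y)$, and playing $u$ and then $v$ at the end is legal because every earlier vertex lies in $V(H)$ and so is non-adjacent to both $u$ and $v$: thus $v$ is still uncovered when $u$ is played and $u$ is still uncovered when $v$ is played, while $v\in N_G(u)$ and $u\in N_G(v)$. The delicate step, which I expect to be the main obstacle, is checking that inserting $x$ between $y$ and $z_2$ does not destroy the legality of the later $z_i$'s; this is where the identity $N_H(x)=N_H(y)$ is used, since the private neighbor of $z_i$ supplied by legality of $(y,z_2,\dots,z_{k-2})$ in $H$ lies in $V(H)$, and inserting $x$ enlarges the covered set only by $N_G(x)$, whose trace on $V(H)$ is $N_H(x)=N_H(y)$, already covered by $y$. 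The reason one cannot instead splice $x$ into an arbitrary total dominating sequence of $H$ is that the distinguishing neighbor $w$ of $x$ may already be covered by that sequence; fixing $y$ at the front and placing $x$ immediately after it avoids this, because there $x$ needs only a neighbor outside $N_G(y)$, which $w$ provides by construction.
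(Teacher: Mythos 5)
Your proof is correct and follows essentially the same route as the paper's: both locate a vertex in $N[u]\cup N[v]$ distinguishing the two twins in $G$, place the twins as the first two terms of a total dominating sequence of $H$ extended by $u,v$, and use $N_H(x)=N_H(y)$ to preserve legality of the remaining terms, producing a forbidden sequence of length $k+1$. Your version is slightly more careful on two points the paper leaves implicit (that the two twins cannot both occur in a legal sequence of $H$, and the verification that the later terms stay legal), but the argument is the same.
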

\begin{proof}
Let $H=G\backslash (N[u]\cup N[v]) $.
To the contrary, assume that $H$ has some false twins, say $w$ and $w'$.
Since $w$ and $w'$ are not false twins in $G$, there exists a vertex $x$ in $V(G)\backslash \{w,w'\}$ adjacent to exactly one of them.
Without loss  of generality suppose that $xw\in E(G)$ and $xw'\notin E(G)$.
Note that $x$ is not in $V(H)$ as $w$ and $w'$ are false twins in $H$.
Moreover, it is clear that $x\notin \{u,v\}$ and therefore, $x$ belongs to $(N(u)\cup N(v)) \backslash \{u,v\}$.

By Lemma \ref{lem:reduction} we have that $H$ is a total $(k-2)$-uniform graph.
Thus, $H$ has a total dominating sequence of length $k-2$ starting with $w$, say $(w=w_1,\dots,w_{k-2})$.

Now consider the sequence $S=(w',w_1,\dots,w_{k-2},u,v)$.
Since $x$ is a neighbor of $w_1$ but not of $w'$, we have $N(w_1)\backslash N(w')\neq \emptyset$.
On the other hand,
as $w'$ and $w_1$ are false twins in $H$ and $(w_1,\dots,w_{k-2})$ is a legal sequence in $H$,
we see that $(w',w_1,\dots,w_{k-2})$ is a legal sequence in $G$.
Moreover, as $u\in N(v)$ and $v\in N(u)$ are adjacent to no vertex in $V(H)$, we obtain that $S$ is a legal sequence in $G$.
In addition, it is clear that $\{w',w_1,\dots,w_{k-2},u,v\}$ is a total dominating set of $G$ and hence, $S$ is a total dominating sequence of $G$ of length $k+1$, contradiction.
\end{proof}

Combining the results in Lemmas \ref{lem:reduction} and \ref{lem:falsetwinreduction} yields the following result.

\begin{proposition}\label{prop:kunif-ftf-red}
Let $k\geq 4$ be an even integer.
Let $G\in \mathcal{G}_k$ and $uv$ be an arbitrary edge of $G$.
Then $G\backslash (N[u]\cup N[v]) $ is in $\mathcal{G}_{k-2}$.
\end{proposition}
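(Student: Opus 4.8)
The plan is to derive Proposition~\ref{prop:kunif-ftf-red} as an immediate corollary of the two lemmas that precede it, essentially by assembling the two facts they provide. Fix an even $k\geq 4$, a graph $G\in\mathcal{G}_k$, and an arbitrary edge $uv\in E(G)$, and set $H=G\backslash(N[u]\cup N[v])$. Recall that membership of a graph in $\mathcal{G}_{k-2}$ requires three things: that it has no isolated vertex, that it is total $(k-2)$-uniform, and that it is false twin-free.

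First I would invoke Lemma~\ref{lem:reduction}. Since $G\in\mathcal{G}_k$, in particular $G$ is total $k$-uniform with no isolated vertices and $k\geq 3$, so the hypotheses of Lemma~\ref{lem:reduction} are met for the edge $uv$. The conclusion of that lemma states precisely that $H$ is a total $(k-2)$-uniform graph with no isolated vertex. This takes care of the first two of the three required properties in one step. Next I would invoke Lemma~\ref{lem:falsetwinreduction}: again, $k\geq 4$ is even, $G\in\mathcal{G}_k$, and $uv\in E(G)$, so its hypotheses hold and it yields that $H$ is false twin-free. That supplies the third property.

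Combining these two observations, $H$ has no isolated vertex, $\gamma_t(H)=\gamma_{gr}^t(H)=k-2$, and $H$ is false twin-free, which is exactly the definition of $H\in\mathcal{G}_{k-2}$, completing the proof. There is essentially no obstacle here: the proposition is a packaging statement, and the only thing to check is that the hypotheses of both lemmas are satisfied, which they are verbatim from $G\in\mathcal{G}_k$ together with $k\geq 4$ even and $uv\in E(G)$. The one minor point worth stating explicitly is that $\mathcal{G}_{k-2}$ is well-defined since $k-2\geq 2$ is itself even, so the target family is one of the families $\mathcal{G}_j$ introduced earlier; this is why the bound $k\geq 4$ (rather than merely $k\geq 3$) is needed in the statement.
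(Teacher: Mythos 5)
Your proposal is correct and matches the paper exactly: the paper presents this proposition as an immediate combination of Lemma~\ref{lem:reduction} (which gives that $H$ is total $(k-2)$-uniform with no isolated vertex) and Lemma~\ref{lem:falsetwinreduction} (which gives that $H$ is false twin-free). Your write-up simply makes explicit the hypothesis-checking that the paper leaves implicit.
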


\section{Total $k$-Uniform Graphs with Small $k$}\label{sec:newtkunif}
In this section, we introduce a new connected total $4$-uniform graph and based on this graph, we present a new connected total 8-uniform graph.

The graph $K_{n,n}-M$, where $M$ is an arbitrary perfect matching of $K_{n,n}$, is called a \emph{crown graph} on $2n$ vertices.
Graphs which are isomorphic to $K_n\cup K_m$, where $m,n\geq 2$, or a crown graph on at least 6 vertices are members of $\mathcal{G}_4$.
It is easy to see that the former ones are the only disconnected graphs in $\mathcal{G}_4$.
The work in \cite{gologranc2019graphs} showed that the latter ones are the only bipartite graphs in $\mathcal{G}_4$ and conjectured that these are the only connected graphs belonging to $\mathcal{G}_4$.
In this section, we disprove their conjecture by giving a counterexample which is the line graph of a complete graph with 6 vertices.

For a given graph $G$, its line graph $L(G)$ is a graph such that 
each vertex of $L(G)$ represents an edge of $G$; and
two vertices of $L(G)$ are adjacent if and only if their corresponding edges are incident.
Particularly, in $L(K_n)$ vertices are pairs of elements in $\{1,2,\dots,n\}$ and two vertices are adjacent when they have a common element.\\

\begin{proposition}\label{prop:total4example}
The graph $L(K_6)$ is total 4-uniform.
\end{proposition}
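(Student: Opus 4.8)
The plan is to establish the two equalities $\gamma_t(L(K_6)) = 4$ and $\gamma_{gr}^t(L(K_6)) = 4$ directly from the combinatorial description of $L(K_6)$: vertices are the $\binom{6}{2} = 15$ pairs from $[6] = \{1,\dots,6\}$, and two pairs are adjacent exactly when they share an element. A convenient reformulation is that a set $S$ of pairs is a total dominating set iff for every pair $\{a,b\}$ there is some pair in $S$ meeting $\{a,b\}$, i.e. the set of elements $\bigcup S$ that appear in pairs of $S$ together with the pairs themselves ``cover'' every pair — concretely, $S$ is a total dominating set iff for every pair $\{a,b\}$, $S$ contains a pair sharing an element with $\{a,b\}$ and moreover each pair of $S$ itself must be dominated (it has a neighbor in $S$). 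First I would show the lower bound $\gamma_t \ge 4$: a perfect matching of $K_6$ (three disjoint pairs) is the natural ``bad'' configuration, and one checks that no set of three pairs can totally dominate all 15 vertices — if the three pairs in $S$ cover at most all $6$ elements, some pair disjoint from the union is undominated, and if they do not cover all elements similarly a pair inside the missed part is undominated; a short case analysis on how many elements $\bigcup S$ contains (at most $6$) finishes this. Then exhibit an explicit total dominating set of size $4$, e.g. $\{12,34,56,13\}$ or any four pairs whose union is $[6]$ and which pairwise-dominate each other, giving $\gamma_t = 4$.

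For the upper bound $\gamma_{gr}^t \le 4$, which is the heart of the proposition, I would argue that any legal open-neighborhood sequence $(v_1,\dots,v_m)$ in $L(K_6)$ has $m \le 4$, and then note that any total dominating sequence of length $\le 4$ must have length exactly $4$ by the lower bound. The key observation is the ``footprint'' condition: for the sequence to be legal, each $v_i$ ($i \ge 2$) must have a neighbor not dominated by $v_1,\dots,v_{i-1}$, i.e. there is a pair $w_i$ sharing an element with $v_i$ but sharing an element with none of $v_1,\dots,v_{i-1}$. Assign to $v_i$ such a witness $w_i$ (and $w_1$ arbitrary neighbor of $v_1$). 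The crucial claim is that the witnesses $w_1,\dots,w_m$ are pairwise disjoint as pairs in $[6]$: indeed $w_i$ is disjoint from every pair that is dominated by $\{v_1,\dots,v_{i-1}\}$, and in particular $w_i$ is disjoint from $v_j$ for $j<i$; but $w_j$ shares an element with $v_j$, and — here is where I must be careful — I need $w_i$ disjoint from $w_j$, which requires that $w_j$ is dominated by some earlier $v_\ell$ with $\ell \le j$; since $w_j$ is adjacent to $v_j$, it is dominated by $v_j$ itself, hence dominated by $\{v_1,\dots,v_i\}$ for any $i \ge j$, so $w_{i}$ (witness for $v_i$, $i>j$) being undominated by $\{v_1,\dots,v_{i-1}\} \supseteq \{v_1,\dots,v_j\}$ is disjoint from $w_j$. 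Thus $w_1,\dots,w_m$ are $m$ pairwise-disjoint $2$-element subsets of $[6]$, forcing $m \le 3$ — wait, that would give $\gamma_{gr}^t \le 3$, contradicting $\gamma_t = 4$, so the witness argument as stated must be refined.

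The resolution — and the step I expect to be the real obstacle — is that $w_1$ need not be disjoint from the others (there is no legality constraint forcing $v_1$'s witness to avoid anything), so the pairwise-disjointness only applies to $w_2,\dots,w_m$, giving $m - 1 \le 3$, hence $m \le 4$, which is exactly what we want. I would therefore carefully set up the witness assignment for $i \ge 2$ only, prove $w_2,\dots,w_m$ pairwise disjoint (using that $w_i$ for $i\ge 2$ is undominated by $\{v_1,\dots,v_{i-1}\}$ and each $w_j$ with $2 \le j < i$ is dominated by $v_j \in \{v_1,\dots,v_{i-1}\}$), conclude $m-1 \le \lfloor 6/2 \rfloor = 3$ so $m \le 4$, and combine with $\gamma_t(L(K_6)) = 4$ and the general bound $\gamma_t \le \gamma_{gr}^t$ to get $\gamma_t(L(K_6)) = \gamma_{gr}^t(L(K_6)) = 4$. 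The bookkeeping to confirm that $w_1$ genuinely cannot be forced disjoint, and that no parity or covering subtlety sneaks an extra vertex in, is the delicate part; everything else is the routine lower-bound case check and the explicit size-$4$ example.
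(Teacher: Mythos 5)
Your upper-bound argument, which you correctly identify as the heart of the proposition, contains a fatal gap: the ``crucial claim'' that the witnesses $w_2,\dots,w_m$ are pairwise disjoint is false, and the justification conflates two different conditions. Legality gives you that $w_i$ is not adjacent to any of the \emph{sequence vertices} $v_1,\dots,v_{i-1}$, i.e.\ that $w_i$ is disjoint from (or equal to) each pair $v_j$ with $j<i$. It does \emph{not} give you that ``$w_i$ is disjoint from every pair that is dominated by $\{v_1,\dots,v_{i-1}\}$'': the set of dominated pairs is $N(v_1)\cup\dots\cup N(v_{i-1})$, which is large, and $w_i$ will in general meet many of its members. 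Since $w_j$ is only guaranteed to be \emph{dominated by} $v_j$ (it need not equal any $v_\ell$), the fact that $w_i$ avoids the $v_\ell$'s yields only $w_i\neq w_j$, not $w_i\cap w_j=\emptyset$. Concretely, $(\{1,2\},\{2,3\},\{4,5\},\{5,6\})$ is a legal sequence in $L(K_6)$ in which the witness of $v_4=\{5,6\}$ is forced to be $\{4,5\}$ and every admissible witness of $v_3=\{4,5\}$ is $\{4,6\}$ or $\{5,6\}$; these intersect, so pairwise disjointness already fails at length $4$. Your own sanity check (the unrestricted claim would give $m\le 3$, contradicting $\gamma_t=4$) correctly signalled trouble, but the diagnosis that only $w_1$ is the problem is wrong: the claim fails among $w_2,\dots,w_m$ as well, so the bound $m-1\le 3$ does not follow.

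The paper's proof uses the disjointness of a \emph{single} witness from the \emph{preceding sequence vertices} rather than disjointness among witnesses: if $(v_1,\dots,v_5)$ were legal, the witness of $v_5$ is non-adjacent to each of $v_1,\dots,v_4$, which forces $|v_1\cup v_2\cup v_3\cup v_4|\le 4$; since four distinct $2$-sets need at least four points, equality holds, and the witness condition at $v_4$ then forces $|v_1\cup v_2\cup v_3|=3$, so $v_1,v_2,v_3$ are the three pairs of a $3$-set and $v_3\subseteq v_1\cup v_2$ violates legality at position $3$. Your lower-bound plan is essentially the paper's and is salvageable, though note one slip: when $\bigcup S=\{1,\dots,6\}$ (a perfect matching) there is no pair disjoint from $\bigcup S$; the obstruction there is that the three members of $S$ fail to dominate one another, which the paper handles uniformly by observing that some member of a size-$3$ total dominating set must be adjacent to both others, forcing $|v_1\cup v_2\cup v_3|\le 4$.
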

\begin{proof}
Notice that it suffices to prove that $4\leq \gamma_t(L(K_6))$ and $\gamma_{gr}^t(L(K_6))\leq 4$.
We first show that $\gamma_t(L(K_6))\geq 4$.
On the contrary, suppose that $L(K_6)$ has a total dominating set of size 3, 
say $S=\{v_1,v_2,v_3\}$.
Recall that vertices in $L(K_6)$ correspond to sets of size two.
As $S$ is a total dominating set, one of the members of $S$ is adjacent to both of the other members in $S$.
Without loss of generality, assume that $v_1$ is adjacent to both $v_2$ and $v_3$.
Therefore, $v_1\cap v_2 \neq \emptyset $ and $v_1\cap v_3 \neq \emptyset $, 
and hence,
$|v_1\cup v_2|\leq 3$ and $|v_1\cup v_2 \cup v_3|\leq 4$.
Then a vertex $v_4$ consisting of two elements in $\{1,2,\dots,6\} \backslash (v_1\cup v_2 \cup v_3)$ is adjacent to none of $v_1,v_2$ and $v_3$, and thus we obtain a contradiction.

We next show that $\gamma_{gr}^t(L(K_6))\leq 4$.
Assume to the contrary that $L(K_6)$ has a legal sequence $(v_1,v_2,v_3,v_4,v_5)$.
Then $v_5$ has a neighbor which is adjacent to none of $v_1,v_2,v_3$ and $v_4$, and hence,
we obtain $|\cup_{i=1}^4 v_i |\leq 4$.
As $v_1,v_2,v_3$ and $v_4$ are distinct sets of size two, $\cup_{i=1}^4 v_i$ cannot be of size 3 or less.
Therefore, we see that $|\cup_{i=1}^4 v_i |=4$.
Since $v_4$ has a neighbor which is adjacent to none of $v_1,v_2$ and $v_3$,
we have that $v_4$ has an element not contained in $\cup_{i=1}^3 v_i$.
Consequently, we get $|\cup_{i=1}^3 v_i|=3$, which implies that $v_1=\{a,b\}$, $v_2=\{b,c\}$ and $v_3=\{a,c\}$ for some $a,b,c\in \{1,2,\dots,6\}$.
Then, as $v_3$ is contained in $v_1\cup v_2$, any neighbor of $v_3$ is adjacent to $v_1$ or $v_2$, and thus,
we obtain a contradiction with the assumption of $(v_1,v_2,v_3,v_4,v_5)$ being legal.
\end{proof}
Clearly $L(K_6)$ is connected and false twin-free but not a bipartite graph (as $\{1,2\}, \{2,3\},\{1,3\}$ form a triangle) and therefore, 
it is a counterexample disproving Conjecture 3.2 in \cite{gologranc2019graphs}.
In other words, a connected total 4-uniform graph does not have to be a bipartite graph.

\begin{remark}
Proposition \ref{prop:total4example} also plays a role in constructing a counterexample for the converse of the statement in Proposition \ref{prop:kunif-ftf-red} as follows. Indeed, if $G\backslash (N[u]\cup N[v]) $ is a graph in $\mathcal{G}_{k-2}$ for any edge $uv$ of $G$, then $G$ does not have to be in $\mathcal{G}_k$.
As an example of $G$ consider $L(K_9)$.
For any two adjacent vertices $\{a,b\}$ and $\{b,c\}$ in $L(K_9)$, removing these two vertices together with their neighbors results in a graph isomorphic to $L(K_6)$, which is total 4-uniform.
However, $L(K_9)$ is not a total $k$-uniform graph since $(\{1,2\}, \{2,3\}, \{3,4\},\{5,6\}, \{6,7\},\{7,8\})$ and $(\{1,9\} \{2,9\},\dots, \{7,9\})$ are total dominating sequences in $L(K_9)$ of lengths $6$ and $7$, respectively. 

\end{remark}

We next make use of product of graphs to obtain new total $k$-uniform graphs.
The \emph{direct product} $G\times H$ of graphs $G$ and $H$ is a graph such that the vertex set of $G\times H$ is the Cartesian product $V(G)\times V(H)$ and vertices $(g,h)$ and $(g',h')$ are adjacent in $G\times H$ if and only if $gg'\in E(G)$ and $hh'\in E(H)$.
In particular, for a graph $G$ where $V(G)=\{v_1,v_2,...,v_n\}$, the graph $G\times K_2$ (also called \emph{bipartite double cover} of $G$) is a bipartite graph with parts $\{v_1',v_2',...,v_n'\}$ and $\{v_1'',v_2'',...,v_n''\}$ satisfying $v_i',v_j''\in E(G\times K_2)$ if and only if $v_i,v_j\in E(G)$.
See Figure \ref{fig:g*} for an example of $G\times K_2$.

 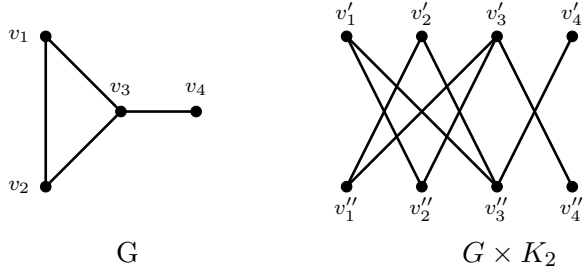
\begin{figure}\centering
 \begin{tikzpicture}[line cap=round,line join=round,>=triangle 45,x=1cm,y=1cm]
 \clip(0.5,-0.1) rectangle (8.5,4);
 \draw [line width=1pt] (1,3)-- (1,1);
 \draw [line width=1pt] (1,1)-- (2,2);
 \draw [line width=1pt] (2,2)-- (1,3);
 \draw [line width=1pt] (2,2)-- (3,2);
 \draw [line width=1pt] (5,3)-- (6,1);
 \draw [line width=1pt] (5,3)-- (7,1);
 \draw [line width=1pt] (6,3)-- (5,1);
 \draw [line width=1pt] (6,3)-- (7,1);
 \draw [line width=1pt] (7,3)-- (5,1);
 \draw [line width=1pt] (7,3)-- (6,1);
 \draw [line width=1pt] (7,3)-- (8,1);
 \draw [line width=1pt] (8,3)-- (7,1);
 \draw (1.8,0.4) node[anchor=north west] {G};
 \draw (6.4,0.4) node[anchor=north west] {$G\times K_2$};
 \begin{scriptsize}
 \draw [fill=black] (1,3) circle (2pt);
 \draw[color=black] (0.65,3) node {$v_1$};
 \draw [fill=black] (1,1) circle (2pt);
 \draw[color=black] (0.65,1) node {$v_2$};
 \draw [fill=black] (2,2) circle (2pt);
 \draw[color=black] (2,2.3) node {$v_3$};
 \draw [fill=black] (3,2) circle (2pt);
 \draw[color=black] (3,2.3) node {$v_4$};
 \draw [fill=black] (5,3) circle (2pt);
 \draw[color=black] (5,3.3) node {$v_1'$};
 \draw [fill=black] (6,3) circle (2pt);
 \draw[color=black] (6,3.3) node {$v_2'$};
 \draw [fill=black] (7,3) circle (2pt);
 \draw[color=black] (7,3.3) node {$v_3'$};
 \draw [fill=black] (8,3) circle (2pt);
 \draw[color=black] (8,3.3) node {$v_4'$};
 \draw [fill=black] (5,1) circle (2pt);
 \draw[color=black] (5,0.7) node {$v_1''$};
 \draw [fill=black] (6,1) circle (2pt);
 \draw[color=black] (6,0.7) node {$v_2''$};
 \draw [fill=black] (7,1) circle (2pt);
 \draw[color=black] (7,0.7) node {$v_3''$};
 \draw [fill=black] (8,1) circle (2pt);
 \draw[color=black] (8,0.7) node {$v_4''$};
 \end{scriptsize}
 \end{tikzpicture}
 \caption{A graph $G$ and its bipartite double cover $G\times K_2$.}\label{fig:g*}
 \end{figure}

\begin{theorem}[Theorem 5.29 in \cite{imklav}] \label{thm:conGH}
Let $G$ and $H$ be graphs with at least one edge.
Then $G\times H$ is connected if and only if $G$ and $H$ are connected and at least one of them is non-bipartite.
Furthermore, if both $G$ and $H$ are connected and bipartite,
then $G\times H$ has exactly two connected components.
\end{theorem}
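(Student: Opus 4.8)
The plan is to reduce the whole statement to the standard fact that a walk in a direct product is the same thing as a pair of \emph{synchronised} equal-length walks in the factors. Concretely, a sequence $(g_0,h_0),(g_1,h_1),\dots,(g_\ell,h_\ell)$ is a walk in $G\times H$ if and only if $g_0g_1\cdots g_\ell$ is a walk in $G$ and $h_0h_1\cdots h_\ell$ is a walk in $H$; this is immediate from the definition of adjacency in $G\times H$. Hence $(g,h)$ and $(g',h')$ lie in the same connected component of $G\times H$ if and only if there is an integer $\ell\ge 0$ for which $G$ has a walk of length $\ell$ from $g$ to $g'$ \emph{and} $H$ has a walk of length $\ell$ from $h$ to $h'$. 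So everything comes down to understanding, in a connected graph with at least one edge, which lengths occur among the walks between two prescribed vertices.

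That is the content of the one lemma I would prove first: if $F$ is a connected graph with at least one edge and $u,v\in V(F)$, then $F$ has a walk of length $\ell$ from $u$ to $v$ exactly when $\ell\ge \operatorname{dist}_F(u,v)$ and, in case $F$ is bipartite, additionally $\ell\equiv \operatorname{dist}_F(u,v)\pmod 2$; moreover, when $F$ is non-bipartite there is a threshold $N$ such that a walk of every length $\ell\ge N$ from $u$ to $v$ exists, of either parity. The ``only if'' direction is just the parity of walks in bipartite graphs; the ``if'' direction uses that any walk can be lengthened by $2$ (traverse an incident edge back and forth) and, in the non-bipartite case, that inserting a fixed closed walk of odd length flips the parity. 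This is routine, but it is where a little care is needed --- handling $u=v$, graphs that are a single edge, and pinning down $N$.

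With the lemma in hand the theorem assembles in three cases. If one factor, say $G$, is disconnected, choose $g,g'$ in different components of $G$ and any $h,h'$; there is no walk from $g$ to $g'$ in $G$, so $(g,h)$ and $(g',h')$ lie in different components of $G\times H$, which is therefore disconnected. If both factors are connected and at least one, say $G$, is non-bipartite, then given any $(g,h),(g',h')$ pick $\ell$ large and congruent to $\operatorname{dist}_H(h,h')$ modulo $2$; the lemma yields a walk of length $\ell$ from $g$ to $g'$ in $G$ (non-bipartite side, $\ell$ large) and a walk of length $\ell$ from $h$ to $h'$ in $H$, so $G\times H$ is connected. Finally, if both factors are connected and bipartite, with bipartitions $(A_1,A_2)$ of $G$ and $(B_1,B_2)$ of $H$, the forced parities show that $(g,h)$ and $(g',h')$ are joined in $G\times H$ if and only if $\operatorname{dist}_G(g,g')\equiv \operatorname{dist}_H(h,h')\pmod 2$; thus $V(G\times H)$ splits into the two nonempty sets $(A_1\times B_1)\cup(A_2\times B_2)$ and $(A_1\times B_2)\cup(A_2\times B_1)$, no edge of $G\times H$ joins these two sets, and the lemma (large $\ell$ of the correct parity) shows that each of them is connected --- hence $G\times H$ has exactly two components. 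Reading the three cases together gives both the biconditional in the first assertion and the ``furthermore'' clause.

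The only genuine obstacle is the length lemma, and even that is classical graph theory; everything after it is bookkeeping with the two parity classes. I would spend most of the effort making the non-bipartite case of the lemma precise, since that is what powers both ``connected'' conclusions above.
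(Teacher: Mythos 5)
The paper does not prove this statement at all: it is imported verbatim as Theorem~5.29 of Imrich--Klav\v{z}ar (it is Weichsel's classical theorem on connectedness of the direct product), so there is no in-paper argument to compare yours against. Judged on its own, your proof follows the standard route and its architecture is sound: the observation that walks in $G\times H$ are exactly pairs of synchronised equal-length walks in the factors, the reduction to the set of walk lengths between two vertices of a connected graph, and the three-case assembly (disconnected factor; one non-bipartite factor; two bipartite factors with the parity-class decomposition into $(A_1\times B_1)\cup(A_2\times B_2)$ and $(A_1\times B_2)\cup(A_2\times B_1)$) are all correct and complete the ``if and only if'' together with the ``exactly two components'' clause.

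One point needs repair in the key lemma as you literally stated it. For non-bipartite connected $F$ it is \emph{not} true that a $u$--$v$ walk of length $\ell$ exists exactly when $\ell\ge \operatorname{dist}_F(u,v)$: take $F$ to be a long path from $u$ to $v$ of length $d$ with a triangle glued at $u$; then there is no walk of length $d+1$, since changing parity costs a detour through the odd cycle. So the ``exactly when'' clause is only correct in the bipartite case (where the parity condition is included). The correct non-bipartite statement is precisely your ``moreover'' clause --- a walk of every sufficiently large length, of either parity, obtained by splicing in an odd closed walk through the odd cycle --- and since that threshold version is the only thing your main argument actually invokes for the non-bipartite factor, the proof goes through once the lemma is restated accordingly. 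Also make explicit that in a connected bipartite graph with at least one edge both parts are nonempty, so the two components you exhibit are indeed both nonempty.
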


Therefore, by Theorem \ref{thm:conGH} we observe that if $G$ is a bipartite graph, then the graph $G\times K_2$ has two connected components, and it is easy to see that both are isomorphic to $G$.
On the other hand, $G\times K_2$ is a connected graph when $G$ is non-bipartite.
Now, for a connected, non-bipartite and total $k$-uniform graph $G$, consider a total dominating sequence of $G\times K_2$.
It is easy to verify that a subsequence of the vertices belonging to the same part corresponds to a total dominating sequence of $G$.
Thus, since $G$ is a total $k$-uniform graph, we conclude that
any total dominating sequence of $G\times K_2$ has $k$ elements from both parts and hence, it is a total $2k$-uniform graph.
Consequently, we obtain the following result which allows to create a connected total $2k$-uniform graph based on a connected non-bipartite total $k$-uniform graph. 

\begin{theorem}\label{thm:bipartite}
If the graph $G$ is connected, non-bipartite and total $k$-uniform, 
then $G\times K_2$ is a connected total $2k$-uniform graph.
\end{theorem}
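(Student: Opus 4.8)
The plan is to exploit the bipartition of $G\times K_2$. Write $V(G)=\{v_1,\dots,v_n\}$ and let $A=\{v_1',\dots,v_n'\}$ and $B=\{v_1'',\dots,v_n''\}$ be the two parts, so that $N(v_i')=\{v_j'':v_iv_j\in E(G)\}\subseteq B$ and $N(v_i'')=\{v_j':v_iv_j\in E(G)\}\subseteq A$, with no edges inside $A$ or inside $B$. Since $\gamma_t(G)$ is defined, $G$ has no isolated vertex, and then every vertex of $A\cup B$ has a neighbor, so $G\times K_2$ has no isolated vertex; thus $\gamma_t$ and $\gamma_{gr}^t$ are defined for it, and its connectedness is immediate from Theorem~\ref{thm:conGH} applied with $H=K_2$, as $G$ is connected and non-bipartite. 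Because $\gamma_t\le\gamma_{gr}^t$ always holds, it suffices to prove $\gamma_{gr}^t(G\times K_2)\le 2k$ and $\gamma_t(G\times K_2)\ge 2k$.

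For the upper bound I would take an arbitrary total dominating sequence $S$ of $G\times K_2$ and split it into the subsequence $S_A$ of its vertices lying in $A$ and the subsequence $S_B$ of its vertices lying in $B$; since $A\cup B$ is a partition, $|S|=|S_A|+|S_B|$. The key claim is that if $S_A=(v_{i_1}',\dots,v_{i_p}')$, then $(v_{i_1},\dots,v_{i_p})$ is a legal sequence in $G$: these vertices are distinct, and for each $t\ge 2$, legality of $S$ provides a vertex $w\in N(v_{i_t}')$ adjacent to no vertex preceding $v_{i_t}'$ in $S$; since $w\in B$, writing $w=v_m''$ gives $v_{i_t}v_m\in E(G)$ while $v_{i_s}v_m\notin E(G)$ for every $s<t$, i.e.\ $v_m\in N_G(v_{i_t})\setminus\bigcup_{s<t}N_G(v_{i_s})$. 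As every legal sequence of $G$ extends to a total dominating sequence of $G$, which has length $k$ since $G$ is total $k$-uniform, we get $p\le k$; symmetrically $|S_B|\le k$, whence $|S|\le 2k$.

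For the lower bound I would take an arbitrary total dominating set $D$ of $G\times K_2$ and put $D_A=D\cap A$, $D_B=D\cap B$. Then $\{v_i:v_i'\in D_A\}$ is a total dominating set of $G$: for any $v_j\in V(G)$ the vertex $v_j''\in B$ has a neighbor in $D$, and since $N(v_j'')\subseteq A$ this neighbor lies in $D_A$, giving some $v_m'\in D_A$ with $v_jv_m\in E(G)$. Hence $|D_A|\ge\gamma_t(G)=k$, and symmetrically $|D_B|\ge k$, so $|D|\ge 2k$. Combining the two bounds with $\gamma_t\le\gamma_{gr}^t$ yields $\gamma_t(G\times K_2)=\gamma_{gr}^t(G\times K_2)=2k$, which together with connectedness is the assertion.

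The only genuinely delicate point is the projection claim in the upper bound: one must check that ``not dominated by an earlier vertex of $S$'' already forces ``not dominated by an earlier vertex of the subsequence $S_A$'', and this works precisely because every neighbor of a vertex of $A$ lies in $B$, so restricting attention to the $A$-part loses nothing. Everything else is routine bookkeeping. As a consistency check, the construction applied to $G=L(K_6)$ (connected, non-bipartite, total $4$-uniform by Proposition~\ref{prop:total4example}) produces the promised connected total $8$-uniform graph $L(K_6)\times K_2$, and applied to $G=K_3$ it reproduces the crown graph $C_6$ in $\mathcal{G}_4$.
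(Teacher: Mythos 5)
Your proposal is correct and follows essentially the same route as the paper: project the $A$-part and $B$-part of any total dominating sequence of $G\times K_2$ down to $G$ and use that each projection is (extendable to) a total dominating sequence of $G$, hence has length exactly $k$. You merely organize this as two separate inequalities (legality gives $|S_A|,|S_B|\le k$; total domination gives $|D_A|,|D_B|\ge k$), filling in the details the paper leaves as ``easy to verify.''
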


\begin{remark}
Note that $K_n$ is a connected, non-bipartite and total 2-uniform graph when $n\geq 3$, and a crown graph on $2n$ vertices is indeed $K_n \times K_2$.
Therefore, the fact that a crown graph on at least 6 vertices being a connected total 4-uniform graph is a simple application of Theorem \ref{thm:bipartite}.
\end{remark}

Combining the results in Proposition \ref{prop:total4example} and Theorem \ref{thm:bipartite}, we obtain a connected total 8-uniform graph which does not appear in the literature.
\begin{corollary} 
The graph $L(K_6)\times K_2$ is connected and total 8-uniform.
\end{corollary}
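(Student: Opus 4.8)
The plan is to invoke the two results that have already been established and let them do all the work. By Proposition~\ref{prop:total4example}, the line graph $L(K_6)$ is total $4$-uniform, so in particular $\gamma_t(L(K_6))=\gamma_{gr}^t(L(K_6))=4$. Moreover $L(K_6)$ is connected (any two edges of $K_6$ are joined by a short chain of pairwise-incident edges) and non-bipartite, since the three vertices $\{1,2\},\{2,3\},\{1,3\}$ induce a triangle. Thus $L(K_6)$ satisfies exactly the hypotheses of Theorem~\ref{thm:bipartite} with $k=4$.

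Applying Theorem~\ref{thm:bipartite} to $G=L(K_6)$ then immediately yields that $G\times K_2 = L(K_6)\times K_2$ is connected and total $2k=8$-uniform, which is precisely the assertion of the corollary. So the proof is essentially a one-line citation: verify connectivity and non-bipartiteness of $L(K_6)$ (both already noted in the paragraph following Proposition~\ref{prop:total4example}), and then quote Theorem~\ref{thm:bipartite}.

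There is no real obstacle here, since both the substantive facts — that $L(K_6)$ is total $4$-uniform, and that the bipartite double cover of a connected non-bipartite total $k$-uniform graph is a connected total $2k$-uniform graph — are proved earlier in the excerpt. The only things to be careful about are the hypotheses of Theorem~\ref{thm:bipartite}: one must explicitly record that $L(K_6)$ is connected and non-bipartite (connectivity because $K_6$ is connected with minimum degree at least $2$, so its line graph is connected; non-bipartiteness because of the triangle exhibited above). If one wanted to be fully self-contained one could also re-derive connectivity of $L(K_6)\times K_2$ directly from Theorem~\ref{thm:conGH}, noting that $L(K_6)$ is connected and non-bipartite while $K_2$ is connected, but this is already subsumed in Theorem~\ref{thm:bipartite}. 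Hence the corollary follows with essentially no additional argument.
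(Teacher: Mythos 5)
Your proof is correct and follows exactly the paper's route: the corollary is obtained by combining Proposition~\ref{prop:total4example} (that $L(K_6)$ is total $4$-uniform) with Theorem~\ref{thm:bipartite}, after noting that $L(K_6)$ is connected and non-bipartite. Nothing further is needed.
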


\section{Regularity}\label{sec:reg}
In this section, for every even positive integer $k$ we show that any connected graph in $\mathcal{G}_k$ is regular.
We start with studying bipartite graphs.

\begin{proposition}\label{prop:bipartreg}
Let $G$ be a bipartite, connected, false twin-free and total $k$-uniform graph for some even positive integer $k$.
Then, $G$ is a regular graph.
\end{proposition}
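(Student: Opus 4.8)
The plan is to argue by (strong) induction on the even integer $k$, using the reduction of Proposition~\ref{prop:kunif-ftf-red} together with a description of total dominating sequences that is special to bipartite graphs.

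First I would record a one-sided decomposition. Write $X,Y$ for the parts of $G$; since $N(x)\subseteq Y$ for $x\in X$ and $N(y)\subseteq X$ for $y\in Y$, the ``new-neighbourhood'' condition for a vertex of one part never interacts with vertices of the same part. Hence the subsequence of $X$-vertices of a total dominating sequence of $G$ is a legal sequence whose underlying set dominates $Y$, the subsequence of its $Y$-vertices is a legal sequence whose underlying set dominates $X$, and conversely the concatenation of any such pair is again a total dominating sequence. From total $k$-uniformity one then deduces that there are integers $p,q$ with $p+q=k$ such that \emph{every} legal sequence inside $X$ whose set dominates $Y$ has length exactly $p$ --- equivalently, the minimum size of a subset of $X$ dominating $Y$ and the maximum length of a legal sequence inside $X$ both equal $p$ --- and symmetrically inside $Y$ with $q$. (For the upper bound, concatenate the one-sided sequence with a minimum dominating set of the opposite part, noting that once a one-sided sequence dominates the other part no further vertex of its own part can be appended; for the lower bound, $p$ is the minimum covering number. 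One also checks $p,q\ge 2$ when $k\ge 4$, so no vertex is adjacent to its entire opposite part.) The false twin-free hypothesis simply says that the neighbourhoods $\{N(x):x\in X\}$ are pairwise distinct, and likewise for $Y$.

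For $k=2$, the family $\mathcal G_2$ consists of the complete graphs on at least two vertices, whose only bipartite member is $K_2$, which is regular. For $k\ge 4$, pick an edge $uv$ with $u\in X$, $v\in Y$ and set $G'=G\setminus(N[u]\cup N[v])$. By Proposition~\ref{prop:kunif-ftf-red}, $G'\in\mathcal G_{k-2}$; it is bipartite and, by Lemma~\ref{lem:kunifiso}, has no isolated vertex. Since $\gamma_t$ and $\gamma_{gr}^t$ are additive over connected components and $\gamma_t(G')=\gamma_{gr}^t(G')=k-2$, each component of $G'$ satisfies $\gamma_t=\gamma_{gr}^t$, hence is bipartite, connected, false twin-free, and total $k_i$-uniform for some even $k_i$ (Theorem~\ref{thm:noodd}) with $\sum_i k_i=k-2$; so each component is regular by the induction hypothesis. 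Thus $G'$ is a disjoint union of regular bipartite graphs, and for $x$ in the $X$-side of $G'$ one has $\deg_G(x)=\deg_{G'}(x)+|N(x)\cap N(u)|$, relating the $G$-degree of $x$ to the (now known) regularity degree of its component.

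The remaining task --- to pass from ``$G'$ is a union of regular graphs'' to ``$G$ is regular'' --- is the crux and the main obstacle, since $G'$ may be disconnected with components of differing degrees, so the reduction alone carries no regularity across. Here I would exploit the freedom in choosing the deleted edge: for a fixed $x\in X$, the graphs $G\setminus(N[x]\cup N[y])$ for $y\in N(x)$ all belong to $\mathcal G_{k-2}$ and share the same $Y$-side $Y\setminus N(x)$, which lets one compare the correction terms $|N(x)\cap N(y)|$; combining this with the one-sided covering-uniformity of the first step --- which controls how many vertices of each part a minimal cover uses, and hence how the deleted neighbourhoods may overlap --- should force all vertices of $X$ to have a common degree, and symmetrically for $Y$. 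A final edge count together with $p+q=k$ then forces the two common degrees to agree, so that $G$ is regular.
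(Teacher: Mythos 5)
Your overall strategy---strong induction on $k$ via the reduction $G'=G\setminus(N[u]\cup N[v])$ of Proposition~\ref{prop:kunif-ftf-red}, with base case $K_2$---is exactly the paper's, and your identification of the crux (the reduced graph is a disjoint union of regular bipartite graphs whose components may have different degrees, so regularity does not transfer for free) is accurate. But the proof stops there: the step that would ``force all vertices of $X$ to have a common degree'' is only asserted (``should force''), and the mechanism you sketch---comparing the correction terms $|N(x)\cap N(y)|$ as $y$ ranges over $N(x)$, combined with the one-sided covering uniformity $p+q=k$---is not developed and does not obviously close. This is a genuine gap, not a routine verification.

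The missing idea in the paper is simple but essential: a regular bipartite graph has parts of \emph{equal size}, hence so does any disjoint union of regular bipartite graphs. Applied to $H=G\setminus(N[u]\cup N[w])$ for an edge $uw$ with $u\in X$, $w\in Y$, whose parts are $X\setminus N(w)$ and $Y\setminus N(u)$, this gives the identity $|X|-|N(w)|=|Y|-|N(u)|$. Taking two vertices $u,v\in X$ with a common neighbour $w\in Y$ and performing the reduction along $uw$ and along $vw$ yields $|N(u)|=|N(v)|$; connectivity then propagates a common degree $a$ over $X$ and $b$ over $Y$. Note also that your final step is underspecified for the same reason: the edge count $|X|\cdot a=|Y|\cdot b$ alone does not imply $a=b$ (biregular bipartite graphs exist), and ``$p+q=k$'' does not supply the missing constraint. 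The paper again uses $|X|-b=|Y|-a$ together with a gcd argument (writing $a=da_1$, $b=db_1$, $|X|=b_1c$, $|Y|=a_1c$ and deducing $(c-d)(b_1-a_1)=0$ with $c>d$ since $|X|>b$ for $k\geq 4$) to conclude $a=b$. Your preliminary one-sided decomposition of total dominating sequences is correct but is not needed anywhere in the argument.
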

\begin{proof}
The proof is by strong induction on $k$.
For $k=2$, such a graph is $K_2$ and trivially it is regular.
Now let $k\geq 4$ be an even integer and assume that the statement is true for every positive even $k'$ less than $k$.

Let $X$ and $Y$ be the parts of $G$.
Let $u$ and $v$ be two vertices of $X$ having a common neighbor $w\in Y$.
Since $G\in \mathcal{G}_k$, Proposition \ref{prop:kunif-ftf-red} implies that the graph $H=G\backslash (N[u]\cup N[w]) $ is in $\mathcal{G}_{k-2}$.
Let $H_1,\dots,H_r$ be connected components of $H$.
Then, each $H_i$ is bipartite, false twin-free and total $k_i$-uniform for some even positive integer $k_i\leq k-2<k$.
Therefore, the induction hypothesis implies that every $H_i$ is regular.
It is clear that parts of a bipartite regular graph are of the same size and hence, we see that the parts of $H$ are of the same size as well.
Consequently, we obtain $|X|-|N(w)|=|Y|-|N(u)|$.
Similarly, by considering $G\backslash (N[v]\cup N[w]) $ we get $|X|-|N(w)|=|Y|-|N(v)|$ and thus, we have $|N(u)|=|N(v)|$.
In other words, any two vertices in $X$ sharing a common neighbor are of the same degree.

Now, let $u$ and $v$ be any two vertices in $X$.
Since $G$ is connected, there exists a path $u=x_1,y_1,x_2,y_2,\dots, x_{s-1},y_{s-1},x_s=v$ between $u$ and $v$, where $x_1,\dots,x_{s}\in X$ and $y_1,\dots,y_{s-1} \in Y$.
The result above yields that $x_i$ and $x_{i+1}$ have the same degree for each $i=1,\dots,s-1$ and thus, we obtain that any two vertices in $X$ are of the same degree, that is,
there exists a positive integer $a$ such that $|N(x)|=a$ for every $x\in X$. 
In a similar manner, we see that $|N(y)|=b$ for every $y\in Y$ for some positive integer $b$.

A double counting argument on the edges of $G$ gives $|X|\cdot a=|Y|\cdot b$.
Let $d$ be the greatest common divisor of $a$ and $b$.
Then, $a=da_1$ and $b=db_1$ where $a_1$ and $b_1$ are relatively prime.
Therefore, there exists a positive integer $c$ such that $|X|=b_1c$ and $|Y|=a_1c$.
On the other hand, recall that we have $|X|-|N(w)|=|Y|-|N(u)|$ where $u\in X$, $w\in Y$ and $uw\in E(G)$, which gives $b_1c-db_1=a_1c-da_1$.
Then, we have $(c-d)(b_1-a_1)=0$, that is, $c=d$ or $a_1=b_1$.
As $k\neq 2$, $|X|>b$ and hence, $c>d$.
Therefore, we get $a_1=b_1$ which yields that $a=b$ and thus, $G$ is regular.
\end{proof}

\begin{theorem}
For every positive even integer $k$, a connected, false twin-free and total $k$-uniform graph is regular.
\end{theorem}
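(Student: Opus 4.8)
### Proof Proposal

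The plan is to bootstrap from the bipartite case (Proposition \ref{prop:bipartreg}) to the general case by strong induction on the even integer $k$, exactly mirroring the structure of that proof but replacing ``bipartite regular $\Rightarrow$ parts have equal size'' with a direct degree-equality argument. The base case $k=2$ is $\mathcal{G}_2=\{K_n:n\ge 2\}$, which is regular. So fix an even $k\ge 4$, assume the theorem holds for all smaller even values, and let $G$ be connected, false twin-free, and total $k$-uniform.

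The key local step is to show that \emph{any two vertices $u,v$ with a common neighbor $w$ have the same degree}. Apply Proposition \ref{prop:kunif-ftf-red} to the edge $uw$: the graph $H_u = G\backslash(N[u]\cup N[w])$ lies in $\mathcal{G}_{k-2}$. Its connected components $H_1,\dots,H_r$ are each in some $\mathcal{G}_{k_i}$ with $k_i\le k-2$ even, hence regular by the induction hypothesis, so $H_u$ is a disjoint union of regular graphs. The same holds for $H_v = G\backslash(N[v]\cup N[w])$. The obstacle here is that ``$H_u$ regular'' by itself does not immediately give $|N(u)|=|N(v)|$ the way bipartiteness did — we need a counting identity. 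The clean substitute is: every vertex $z\in V(G)\setminus(N[u]\cup N[w])$ has $\deg_{H_u}(z) = \deg_G(z) - |N_G(z)\cap(N[u]\cup N[w])|$, and one should extract from the structure of $H_u$ a quantity that depends on $u$ only through $\deg_G(u)$ (or through $|N[u]\cup N[w]|$) and compare it to the analogous quantity for $v$. Since $N[w]$ is common to both, and both $H_u$ and $H_v$ are unions of regular graphs, the natural move is to count edges or vertices: $|V(H_u)| = |V(G)| - |N[u]\cup N[w]|$ and $|V(H_v)| = |V(G)| - |N[v]\cup N[w]|$. I would look for an invariant of a disjoint union of regular graphs — forced by $\mathcal{G}_{k-2}$-membership — that pins down $|V(H_u)|$, thereby forcing $|N[u]\cup N[w]| = |N[v]\cup N[w]|$; combined with a second such relation (e.g. applying the reduction to a different edge, or counting edges incident to $N[w]$), this should yield $\deg u = \deg v$. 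If a slick invariant is not available, the fallback is to iterate the reduction all the way down to $\mathcal{G}_2 = $ complete graphs and read off the degree data from the sizes of those terminal complete graphs, as in the odd-$k$ argument of Theorem \ref{thm:noodd}.

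Once common-neighbor vertices are known to be equidegree, connectivity finishes it: for arbitrary $u,v\in V(G)$ take a path $u=z_0,z_1,\dots,z_t=v$; consecutive vertices $z_{i-1},z_{i+1}$ share the common neighbor $z_i$, so $\deg z_{i-1}=\deg z_{i+1}$, giving constant degree on even-indexed and on odd-indexed vertices of the path. To merge the two parities, use non-bipartiteness when $G$ is non-bipartite (an odd closed walk forces the two parity classes to coincide), and when $G$ is bipartite invoke Proposition \ref{prop:bipartreg} directly, which already handles that case. This last merging step is the only place the bipartite/non-bipartite dichotomy enters, and it is the secondary obstacle — but it is dispatched cheaply in each case.

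The main obstacle, as flagged above, is the middle step: converting ``$G\backslash(N[u]\cup N[w])\in\mathcal{G}_{k-2}$ is a union of regular graphs'' into the numerical equality $\deg u=\deg v$ without the luxury of bipartite part-balance. I expect the resolution to come from a double-counting identity relating $|V(G)|$, $|N[w]|$, and the degrees of vertices in $N(w)$, applied uniformly over all neighbors $u$ of $w$.
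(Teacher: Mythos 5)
Your proposal has a genuine gap at exactly the point you flag as ``the main obstacle,'' and that gap is not filled. In the bipartite case the paper extracts $|X|-|N(w)|=|Y|-|N(u)|$ from the fact that a regular bipartite graph has parts of equal size; this is the only place where regularity of the components of $G\backslash(N[u]\cup N[w])$ is converted into numerical information about $\deg u$. In the non-bipartite setting, knowing that $H_u=G\backslash(N[u]\cup N[w])$ is a disjoint union of regular graphs gives you no control over $|V(H_u)|$, over the degrees of those components, or over how $N(u)$ and $N(w)$ overlap, so there is no identity forcing $|N[u]\cup N[w]|=|N[v]\cup N[w]|$, let alone $\deg u=\deg v$. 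Your fallback of iterating the reduction down to $\mathcal{G}_2$ fares no better: the sizes of the terminal complete graphs depend on the sequence of edges chosen, and nothing ties them back to $\deg u$ versus $\deg v$. So the inductive step is a description of what you hope exists rather than an argument.

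The paper avoids this entirely. For non-bipartite $G$ it passes to the bipartite double cover $G\times K_2$, which by Theorem \ref{thm:conGH} is connected (here non-bipartiteness of $G$ is used), by Theorem \ref{thm:bipartite} is total $2k$-uniform, and is easily checked to be false twin-free when $G$ is. Proposition \ref{prop:bipartreg} then applies directly to $G\times K_2$, and since $\deg_{G\times K_2}(v')=\deg_G(v)$, regularity of the cover gives regularity of $G$. I would recommend you adopt this reduction: the bipartite case is where the counting argument actually closes, and the double cover is the device that transports every non-bipartite instance into it.
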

\begin{proof}
Let $G$ be a connected graph in $\mathcal{G}_k$.
If $G$ is a bipartite graph, then Proposition \ref{prop:bipartreg} implies that $G$ is regular.
Suppose that $G$ is non-bipartite.
Then Theorem \ref{thm:bipartite} yields that the graph $G\times K_2$ is connected and total $2k$-uniform.
Recall that by construction $G\times K_2$ is bipartite.
Obviously, two vertices from different parts can not be false twins.
Moreover, any two vertices belonging to the same part are not false twins since $G$ is false twin-free and hence, we obtain that $G\times K_2$ is false twin-free as well.
Then we can apply Proposition \ref{prop:bipartreg} on $G\times K_2$ and observe that it is regular which implies that $G$ is regular.
\end{proof}

\section{Total $k$-Uniform Chordal Graphs}\label{sec:chordal}

In this section, we characterize all total $k$-uniform chordal graphs.
In \cite{gologranc2019graphs} Theorem 3.3 states that there is no connected chordal graph $G$ with $\gamma_t(G)=\gamma_{gr}^t(G)=4$.
We extend their result and show that there is no connected total $k$-uniform chordal graph when $k\geq 4$.
Notice that to show this result, it suffices to prove that $\mathcal{G}_k$ has no connected chordal graph for $k\geq 4$.

\begin{lemma}\label{lem:c5c6}
If $G$ is a connected graph in $\mathcal{G}_k$ where $k\geq 4$,
then $G$ has an induced $C_5$ or $C_6$.
\end{lemma}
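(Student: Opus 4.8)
The plan is to argue by contradiction: suppose $G$ is a connected chordal graph in $\mathcal{G}_k$ with $k \geq 4$ that contains neither an induced $C_5$ nor an induced $C_6$. Since $G$ is chordal it has no induced cycle of length $\geq 4$ at all, so in fact $G$ is chordal outright; the content of the lemma is that a connected chordal member of $\mathcal{G}_k$ cannot exist for $k \geq 4$, and exhibiting a forced induced $C_5$ or $C_6$ is the contradiction. So really I want to produce a long induced cycle inside $G$. The natural tool is the reduction machinery: by Proposition \ref{prop:kunif-ftf-red}, for any edge $uv$ the graph $G \backslash (N[u] \cup N[v])$ lies in $\mathcal{G}_{k-2}$, and $\mathcal{G}_2$ consists of complete graphs on at least two vertices. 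Iterating the reduction down to $\mathcal{G}_2$ will let me locate, at the bottom level, an edge, and then pull structure back up one reduction step at a time.

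First I would set up the base case $k=4$ carefully, since that is where the induced cycle actually materializes. Take $G \in \mathcal{G}_4$ connected. Pick any edge $uv$; then $H = G \backslash (N[u] \cup N[v]) \in \mathcal{G}_2$, so $H$ is a disjoint union of complete graphs, and since $G$ has no isolated vertices in $H$ (Lemma \ref{lem:kunifiso}) and is connected, $H$ is nonempty. Fix an edge $ab$ of some clique component of $H$. The key point is that $a$ and $b$ each have all their neighbors outside $N[u] \cup N[v]$ lying within that clique component, yet $\{u,v,a,b\}$ — or rather the sequence $(u,v,a,b)$, which one checks is a total dominating sequence — must dominate all of $V(G)$. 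Because $a, b \notin N[u] \cup N[v]$, there is no vertex adjacent to all four of $u,v,a,b$ in a way that collapses things; analyzing which vertices of $N(u) \cup N(v)$ are adjacent to $a$ or $b$, and using that $(a,b,u,v)$, $(a, b, v, u)$ etc. are all total dominating sequences of length exactly $4$, forces a short path between $\{u,v\}$ and $\{a,b\}$ whose internal structure, combined with the absence of chords, is exactly an induced $C_5$ or $C_6$. Concretely: connectivity gives a shortest path from $u$ to $a$; its length is at least $3$ since $a \notin N[u]$ and (I'd check) $a$ has no common neighbor with $u$ that is also dominated appropriately; a shortest $u$–$a$ path of length $3$ or $4$ together with the edges $uv$ and $ab$ closes up into an induced cycle of length $5$ or $6$, and chordality would then be violated unless such a cycle is genuinely present — which is the conclusion.

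For the inductive step $k \geq 6$, I would take $G \in \mathcal{G}_k$ connected, pick an edge $uv$, and apply Proposition \ref{prop:kunif-ftf-red} to get $H = G \backslash (N[u] \cup N[v]) \in \mathcal{G}_{k-2}$. If some connected component $H_i$ of $H$ is total $k_i$-uniform with $k_i \geq 4$, the induction hypothesis applied to $H_i$ (which is an induced subgraph of $G$, hence chordal) gives an induced $C_5$ or $C_6$ in $H_i$, hence in $G$, and we are done. The remaining case is that every component of $H$ is total $2$-uniform, i.e. $H$ is a disjoint union of cliques; then I am back in essentially the same situation as the base case and run the same path-tracing argument in $G$ to extract the induced cycle. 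The main obstacle I anticipate is the combinatorial bookkeeping in the base/clique case: verifying that the relevant sequences are legal total dominating sequences, ruling out chords on the candidate cycle using the length-exactly-$k$ property (so that any chord would let one insert an extra vertex and produce a sequence of length $k+1$, contradicting uniformity), and handling the degenerate possibilities where $u$ and $v$ have neighbors inside or adjacent to the clique component in a way that shortens the path below length $3$. I would organize that as a short sequence of claims: (i) $a$ has no neighbor in $N[u]\cup N[v]$; (ii) hence $\mathrm{dist}(a,u)\geq 3$; (iii) a shortest $\{a,b\}$–$\{u,v\}$ path has length $3$ or $4$; (iv) it has no chords, by the uniformity/extension argument; (v) closing it with $uv$ and $ab$ yields an induced $C_5$ or $C_6$.
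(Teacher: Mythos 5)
There is a genuine gap, concentrated in your base/clique case, and it is compounded by a framing error. The negation of the lemma's conclusion is only that $G$ has no induced $C_5$ and no induced $C_6$; this does \emph{not} make $G$ chordal (it could still contain an induced $C_4$, $C_7$, \dots), so your opening reduction to ``a connected chordal member of $\mathcal{G}_k$ cannot exist'' proves only Theorem \ref{thm:nochordal}, a weaker consequence, and the chordality you later invoke to rule out chords is not available. More seriously, the intermediate claims of your base case are false for actual members of $\mathcal{G}_4$. Claim (i), that $a$ has no neighbor in $N[u]\cup N[v]$, cannot hold in general: $a\notin N[u]\cup N[v]$, but connectivity forces the clique component of $H$ containing $a$ to send edges into $N(u)\cup N(v)$. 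Concretely, in the crown graph $C_6$ (vertices $1,\dots,6$ in cyclic order, a genuine member of $\mathcal{G}_4$), taking $uv=12$ gives $H=\{4,5\}$, and $4$ is adjacent to $3\in N(2)$, so $\mathrm{dist}(a,v)=2$ and claims (ii) and (iii) collapse. Finally, even with a short $\{u,v\}$--$\{a,b\}$ path in hand, gluing it to the edges $uv$ and $ab$ does not close a cycle: you also need a second, internally disjoint path back from the other endpoint, and controlling adjacencies between the two paths is precisely the hard part your outline leaves open.

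The paper's argument is different and avoids all of this. It starts from an induced path $u,v,w$ (which exists because $G$ is not a disjoint union of cliques), and shows that $G\backslash (N[u]\cup N[v]\cup N[w])$ must contain an isolated vertex $z$: otherwise that graph would have a total dominating sequence $(v_1,\dots,v_m)$ of even length by Corollary \ref{cor:eventotaldomseq}, and $(u,w,v,v_1,\dots,v_m)$ would be a total dominating sequence of $G$ of odd length $m+3$, contradicting Theorem \ref{thm:noodd}. Then Lemma \ref{lem:kunifiso}, applied to the edges $vw$ and $uv$, produces neighbors $x$ and $y$ of $z$ with $x$ adjacent only to $u$ and $y$ adjacent only to $w$ among $\{u,v,w\}$; the set $\{u,v,w,x,y,z\}$ or $\{u,v,w,x,y\}$ then induces a $C_6$ or a $C_5$, with every required non-adjacency certified by the construction rather than by an assumed absence of chords. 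If you want to salvage your approach, the key missing idea is to work from an induced $P_3$ rather than an edge and to locate a vertex non-adjacent to all three of its vertices; that is exactly what the parity argument supplies.
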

\begin{proof}
Let $G$ be a connected graph in $\mathcal{G}_k$ with $k\geq 4$.
It is well-known that a graph is $P_3$-free if and only if it is disjoint union of complete graphs.
Then, since $k\geq4$, we see that $G$ has an induced $P_3$.
Let $u,v,w$ be an induced path in $G$ (that is, $uv,vw\in E(G)$ but $uw\notin E(G)$).
We show that $H=G\backslash (N[u]\cup N[v]\cup N[w])$ has an isolated vertex.
Suppose that $H$ has no isolated vertices.
Note that $H$ is not the empty graph since $k\geq 4$. 
Then, by Corollary \ref{cor:eventotaldomseq} we see that $H$ has a total dominating sequence $(v_1,\dots,v_m)$ such that $m$ is even.
As $G$ is false twin-free, $N(u)\neq N(w)$.
Without loss of generality assume that $w$ has a neighbor which is not adjacent to $u$.
Then, it is easy to verify that $(u,w,v,v_1,\dots,v_m)$ is a total dominating sequence of $G$ and thus , we get $m+3=k$.
However, $m+3$ is odd and we get a contradiction.
Consequently, there exists an isolated vertex $z$ in $H$.

Lemma \ref{lem:kunifiso} implies that $z$ is not an isolated vertex in $G\backslash (N[v]\cup N[w])$ and hence, $z$ has a neighbor $x$ such that $x$ is adjacent to only $u$ among $u,v$ and $w$.
Similarly, $G\backslash (N[u]\cup N[v])$ has no isolated vertex and therefore, there exists a neighbor $y$ of $z$ which is adjacent to 
only $w$ among $u,v$ and $w$.
Finally, if $x$ and $y$ are not adjacent, then the subgraph of $G$ induced by $\{u,v,w,x,y,z\}$ is a $C_6$; and if $x$ and $y$ are adjacent, 
then the subgraph of $G$ induced by $\{u,v,w,x,y,\}$ is a $C_5$.
\end{proof}

Notice that removing a false twin from a connected chordal graph does not affect the connectedness or being chordal.  
Then, since any total $k$-uniform graph has a subgraph in $\mathcal{G}_k$, we obtain the following conclusion by Lemma \ref{lem:c5c6}:
\begin{theorem}\label{thm:nochordal}
For any $k\geq 4$, there does not exist a total $k$-uniform connected chordal graph.
\end{theorem}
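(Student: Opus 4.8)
The plan is to deduce Theorem \ref{thm:nochordal} directly from Lemma \ref{lem:c5c6} together with the standard fact that chordal graphs contain no induced cycle of length at least four. First I would recall that a graph $G$ is \emph{chordal} precisely when every cycle of length at least $4$ has a chord; equivalently, $G$ has no induced $C_n$ for any $n\geq 4$. In particular a chordal graph has neither an induced $C_5$ nor an induced $C_6$.

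Next I would argue by contradiction. Suppose, for some $k\geq 4$, that there exists a connected total $k$-uniform chordal graph $G$. As observed just before the statement, removing false twins one at a time preserves both connectedness and chordality (a false twin is simplicial-preserving in the sense that no induced cycle is created or destroyed), and it changes neither $\gamma_t$ nor $\gamma_{gr}^t$; iterating until no false twins remain yields a connected, false twin-free, chordal graph $G'$ that is still total $k$-uniform, i.e. $G'\in\mathcal{G}_k$. Applying Lemma \ref{lem:c5c6} to $G'$ gives an induced $C_5$ or $C_6$ in $G'$, contradicting chordality of $G'$. Hence no such $G$ exists.

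Finally I would note the easy converse direction to complete the ``characterization'' claim: for $k\le 3$ one appeals to Theorem \ref{thm:noodd} (ruling out $k=1,3$) and to the fact that the total $2$-uniform graphs are the complete multipartite graphs, of which the chordal ones are exactly the complete graphs (any complete multipartite graph with two parts of size $\ge 2$ contains an induced $C_4$). So the total $k$-uniform chordal graphs are exactly: for $k=2$, disjoint unions whose every component is a complete graph $K_n$ with $n\ge 2$ with the additional constraint forcing $\gamma_t=2$ — I would state this carefully using the disconnected members of $\mathcal{G}_4$ description already given, namely $K_n\cup K_m$ — and there are none for $k\geq 3$ odd or $k\ge 4$.

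I do not anticipate a genuine obstacle here, since all the heavy lifting is in Lemma \ref{lem:c5c6}; the only point requiring a line of care is the reduction to a false twin-free graph, making sure that deleting a vertex with a twin cannot create a new induced $C_5$ or $C_6$ and cannot destroy chordality — this is immediate because any induced cycle in $G'-x$ is already induced in $G'$, and conversely an induced cycle through $x$ could be rerouted through its twin, so chordality transfers both ways. Below is the formal write-up.

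\begin{proof}
It is well known that a graph is chordal if and only if it contains no induced cycle $C_n$ with $n\geq 4$; in particular a chordal graph has no induced $C_5$ and no induced $C_6$.

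Suppose, for contradiction, that $G$ is a connected total $k$-uniform chordal graph for some $k\geq 4$. Deleting a vertex that has a false twin preserves connectedness (the twin retains all of the deleted vertex's neighbours) and preserves chordality (every induced cycle of the smaller graph is an induced cycle of $G$), and it changes neither $\gamma_t$ nor $\gamma_{gr}^t$. Repeating this deletion until no false twins remain, we obtain a connected, false twin-free, chordal graph $G'$ with $\gamma_t(G')=\gamma_{gr}^t(G')=k$, that is, $G'\in\mathcal{G}_k$. By Lemma \ref{lem:c5c6}, $G'$ contains an induced $C_5$ or an induced $C_6$, contradicting the chordality of $G'$. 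Therefore no such $G$ exists.
\end{proof}
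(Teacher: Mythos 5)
Your proof is correct and follows exactly the paper's own route: reduce to a connected, false twin-free, chordal graph in $\mathcal{G}_k$ by deleting false twins (which preserves connectedness, chordality, and both domination parameters), then invoke Lemma \ref{lem:c5c6} to produce an induced $C_5$ or $C_6$, contradicting chordality. The extra remarks on the $k\le 3$ cases are not needed for this theorem (they belong to Theorem \ref{thm:chordals}), but the core argument matches the paper's.
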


Theorem \ref{thm:nochordal} implies that a total $k$-uniform chordal graph must be union of $k/2$ number of total 2-uniform chordal graphs.
On the other hand, it is clear that a complete multipartite graph is chordal if and only if at most one partite has 2 or more vertices.
We hereby obtain a complete characterization of total $k$-uniform chordal graphs:
\begin{theorem}\label{thm:chordals}
Let $G$ be a graph without isolated vertices.
$G$ is a total $k$-uniform chordal graph if and only if $G$ is disjoint union of complete multipartite graphs in which at most one partite is of size greater than 1.
\end{theorem}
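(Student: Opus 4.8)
The plan is to prove both directions by passing to connected components and combining the earlier results with the (routine) additivity of both $\gamma_t$ and $\gamma_{gr}^t$ over disjoint unions. Recall that it was already noted above that a complete multipartite graph is chordal if and only if at most one partite class has two or more vertices, so I will use this freely.

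For the forward direction, suppose $G$ is total $k$-uniform and chordal. First I would observe that by Theorem \ref{thm:noodd} the integer $k$ is even. Let $G_1,\dots,G_r$ be the connected components of $G$; each $G_i$ is an induced subgraph of a chordal graph, hence chordal, and has no isolated vertex. Since a legal sequence of a disjoint union restricts on each component to a legal sequence of that component, and conversely any interleaving of total dominating sequences of $G_1,\dots,G_r$ is a total dominating sequence of $G$ (the ``new neighbor'' witnesses and the total domination condition both decouple across components), both $\gamma_t$ and $\gamma_{gr}^t$ are additive over the $G_i$. Hence each $G_i$ is total $k_i$-uniform with $\sum_i k_i=k$. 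By Theorem \ref{thm:noodd} each $k_i$ is even, so $k_i\geq 2$; and by Theorem \ref{thm:nochordal}, a connected chordal total $k_i$-uniform graph forces $k_i\leq 3$. Therefore $k_i=2$ for all $i$, so by the characterization of total $2$-uniform graphs (Theorem 4.4 in \cite{brevsar2016total}) each $G_i$ is complete multipartite, and being chordal it has at most one partite class of size greater than $1$. This gives the stated structure, with $r=k/2$.

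For the converse, suppose $G$ is a disjoint union of complete multipartite graphs $G_1,\dots,G_j$, each having at most one partite class of size greater than $1$. Each $G_i$ is chordal by the fact recalled above, so $G$ is chordal; each $G_i$ is total $2$-uniform by Theorem 4.4 in \cite{brevsar2016total}; and additivity of $\gamma_t$ and $\gamma_{gr}^t$ over disjoint unions yields $\gamma_t(G)=\gamma_{gr}^t(G)=2j$. Setting $k=2j$ shows $G$ is total $k$-uniform and chordal, as required.

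The only mildly delicate point is the additivity of $\gamma_{gr}^t$ over disjoint unions, i.e.\ that no total dominating sequence of $G$ can beat the concatenation of optimal total dominating sequences of its components. This holds because, for any legal sequence of $G$, deleting all of its terms lying outside a fixed component $G_i$ leaves a legal sequence of $G_i$ (each retained vertex still has a private new neighbor, necessarily in $G_i$), while a vertex set is a total dominating set of $G$ exactly when its restriction to each $G_i$ totally dominates $G_i$; so the maximum attainable length splits as a sum. Everything else in the argument is a direct appeal to Theorems \ref{thm:noodd} and \ref{thm:nochordal} and to the known classification of total $2$-uniform graphs, so I do not anticipate further obstacles.
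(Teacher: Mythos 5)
Your proposal is correct and follows essentially the same route as the paper: decompose into connected components, use additivity of $\gamma_t$ and $\gamma_{gr}^t$ over disjoint unions together with Theorem \ref{thm:noodd} and Theorem \ref{thm:nochordal} to force each component to be total $2$-uniform, and then invoke the classification of total $2$-uniform graphs plus the chordality criterion for complete multipartite graphs. The paper states this argument only in a single terse sentence, so your write-up simply supplies the (correct) details, including the additivity justification, that the paper leaves implicit.
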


The \emph{girth} of a graph $G$, denoted by $g(G)$, is the length of a shortest cycle (if any) in $G$.
Acyclic graphs (forests) are considered to have infinite girth.
A graph isomorphic to $K_{1,n}$ for some $n$ is called a \emph{star}.
Clearly, a complete multipartite graph is a tree only if it is a star.
As a result, Lemma \ref{lem:c5c6} yields the following result on the girth of a total $k$-uniform graph:

\begin{theorem}
If $G$ is a total $k$-uniform graph, then either $G$ is disjoint union of $k/2$ stars or $g(G)\leq 6$.
\end{theorem}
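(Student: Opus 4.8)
The plan is to pass to connected components and then split according to whether a component is a star. By Theorem~\ref{thm:noodd} we may assume $k$ is even, since otherwise no such $G$ exists and there is nothing to prove. First I would record the componentwise behaviour of the two parameters: if $C_1,\dots,C_t$ are the connected components of $G$, then $\gamma_t(G)=\sum_i \gamma_t(C_i)$ and $\gamma_{gr}^t(G)=\sum_i \gamma_{gr}^t(C_i)$ — the first being standard, and the second because a total dominating sequence of $G$ restricts on each $C_i$ to a total dominating sequence of $C_i$, while the concatenation (in any order) of total dominating sequences of the $C_i$ is a total dominating sequence of $G$. Since every $C_i$ has no isolated vertex and $\gamma_t(C_i)\le\gamma_{gr}^t(C_i)$, the equality $\gamma_t(G)=\gamma_{gr}^t(G)=k$ forces each $C_i$ to be total $k_i$-uniform for some even $k_i\ge 2$ with $\sum_i k_i=k$.

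Now suppose first that every $C_i$ is a star $K_{1,n_i}$. Then $\gamma_t(C_i)=2$, so $k_i=2$ for every $i$ and $t=k/2$; hence $G$ is a disjoint union of $k/2$ stars and we are in the first alternative. Otherwise some component $C$ is not a star, and I claim $g(C)\le 6$, which gives $g(G)\le g(C)\le 6$. Let $C$ be total $k'$-uniform with $k'$ even. If $k'=2$, then by Theorem 4.4 in \cite{brevsar2016total} the graph $C$ is complete multipartite, and since it is not a star it either contains a triangle (if it has at least three parts) or contains $K_{2,2}$ (if it has exactly two parts), so $g(C)\le 4$. If $k'\ge 4$, I would first make $C$ false twin-free: deleting a false twin from a connected graph leaves it connected (any path through the deleted vertex can be rerouted through its twin) and changes neither $\gamma_t$ nor $\gamma_{gr}^t$, so after finitely many such deletions $C$ contains, as an induced subgraph, a connected graph $C'\in\mathcal{G}_{k'}$. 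Lemma~\ref{lem:c5c6} then yields an induced $C_5$ or $C_6$ in $C'$, hence a cycle of length $5$ or $6$ in $C$ and therefore in $G$, so $g(C)\le 6$. In all cases either $G$ is a disjoint union of $k/2$ stars or $g(G)\le 6$.

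The only place where genuine content enters is the branch $k'\ge 4$, which is handled by Lemma~\ref{lem:c5c6} (resting in turn on the parity reduction of Section~\ref{sec:totalkodd}); everything else is routine. I expect the fussiest point to be the bookkeeping around componentwise additivity of $\gamma_{gr}^t$ and the deduction that each component is itself total $k_i$-uniform, rather than any real obstacle. Alternatively, the ``disjoint union of stars'' case can be extracted from Theorem~\ref{thm:chordals}, since a forest is chordal and a complete multipartite forest is a star.
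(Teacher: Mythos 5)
Your proof is correct and follows essentially the same route the paper intends: reduce to connected components (each necessarily total $k_i$-uniform with $k_i$ even), handle the $k_i=2$ case via the complete multipartite characterization (a complete multipartite graph that is not a star has girth at most $4$), and invoke Lemma~\ref{lem:c5c6} after passing to a false twin-free subgraph when $k_i\geq 4$. The paper leaves these steps implicit, so your write-up is just a more detailed version of its argument.
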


\section{Discussion and Conclusions}\label{sec:dis}
For a fixed positive integer $k$, 
the problem of determining whether a given graph is total $k$-uniform is clearly solvable in polynomial time.
On the other hand, both of the problems of finding the total domination number and finding the Grundy total domination number of a given graph are NP-complete even in bipartite graphs (see, \cite{pfaff1983np} and \cite{brevsar2016total}, respectively).
One research direction is to solve the decision problem of determining whether a given graph is total $k$-uniform for some $k$.
In this paper, we partially answered this problem by Theorem \ref{thm:chordals}, which characterizes all total $k$-uniform chordal graphs.

In Section \ref{sec:newtkunif}, we presented a non-bipartite, connected and total 4-uniform graph, which is a counterexample for a conjecture by \cite{gologranc2019graphs}.
Yet, we have been unable to characterize all total 4-uniform graphs and therefore, 
another potential research direction is to complete this task.

The connection between total 6-uniform graphs and the existence of finite affine planes provided in \cite{gologranc2019graphs} is interesting and shows that finding connected total $k$-uniform graphs becomes more complicated as $k$ gets larger.
In Figure 1 of \cite{gologranc2019graphs} a connected total 6-uniform graph is presented.
In Section \ref{sec:newtkunif}, we provided a connected total 8-uniform graph.
We believe that there exist connected total $k$-uniform graphs also for every even $k\geq 10$ and
finding such graphs is a topic of ongoing research.

In a total $k$-uniform graph, any legal sequence can be extended to a total dominating sequence irrespective of what the initial vertex is. 
Therefore, one would naturally expect a symmetrical structure in the graph.
As one of the main results of this paper, we proved that every connected, false twin-free and total $k$-uniform graph is regular, which is a strong structural result for total $k$-uniform graphs and might help one to solve some of the research problems we posed.

\cite{erey2020uniform} presents another version of total $k$-uniform graphs.
A sequence $(v_1,\dots,v_k)$ is called a \emph{dominating open neighborhood sequence} if $\{v_1,\dots,v_k\}$ is a dominating set and $N(v_i)\backslash \bigcup_{j=1}^{i-1} N(v_j)\neq \emptyset$
holds for every $i \in \{2, \dots , k\}$.
A graph is called \emph{open $k$-uniform} if every dominating open neighborhood sequence has length $k$.
It is easy to see that (also remarked in \cite{erey2020uniform}) every open $k$-uniform graph is also total $k$-uniform and hence, the family of open $k$-uniform graphs is a subclass of total $k$-uniform graphs.
Therefore, all the results we obtain in this paper are valid for open $k$-uniform graphs as well.
For example, there is no open $k$-uniform graph when $k$ is odd.
We believe that the methods we use in this paper can lead to a complete characterization of open $k$-uniform graphs.

\section*{Acknowledgments}
This work is supported by the Scientific and Technological Research
Council of Turkey (TUBITAK) under grant no. 118E799. The work of Didem G\"{o}z\"{u}pek is also supported by the BAGEP Award of the Science Academy of Turkey.
We would like to thank Douglas Rall for making us aware of some properties of the direct product of graphs in Section \ref{sec:newtkunif}. 

\end{document}